\newtheorem{theorem}{Theorem}[section]
\newtheorem{corollary}{Corollary}[section]
\newtheorem{claim}{Claim}[section]
\newcommand{\inlineeqnum}{\refstepcounter{equation}\mbox{~(\theequation)}}
\begin{document}
\thispagestyle{empty}
\begin{center}
{\LARGE Nonlinear maps preserving sums of triple products on $\ast $-algebras}\\
\vspace{.2in}
{\bf Jo\~{a}o Carlos da Motta Ferreira\\ and \\Maria das Gra\c{c}as Bruno Marietto}\\
\vspace{.2in}
{\it Center for Mathematics, Computing and Cognition,\\
Federal University of ABC,\\
Avenida dos Estados, 5001, \\
09210-580, Santo Andr\'e, Brazil.}\\
e-mail: joao.cmferreira@ufabc.edu.br, graca.marietto@ufabc.edu.br
\end{center}

\begin{abstract} Let $\mathcal{A}$ and $\mathcal{B}$ be two unital complex $\ast $-algebras such that $\mathcal{A}$ has a nontrivial projection.  In this paper, we study the structure of bijective nonlinear maps $\Phi :\mathcal{A}\rightarrow \mathcal{B}$ preserving sum of triple products $\alpha _{1} abc+\alpha _{2} a^{*}cb^{*}+\alpha _{3} ba^{*}c +\alpha _{4} cab^{*}+\alpha _{5} bca+\alpha _{6} cb^{*}a^{*},$ where the scalars $\{\alpha _{k}\}_{k=1}^{6}$ are complex numbers satisfying some conditions.
\end{abstract}
{\it \bf 2010 MSC:} 46L10, 47B48\\
{\it \bf Keywords:} Preserving problems, prime algebras, $\ast $-algebras, $\ast $-ring isomorphisms

\section{Introduction}

Let $\mathcal{A}$ and $\mathcal{B}$ be two complex $\ast $-algebras and $\{\alpha _{k}\}_{k=1}^{6}$ arbitrary complex numbers. We say that a nonlinear map $\Phi :\mathcal{A}\rightarrow \mathcal{B}$ {\it preserves sum of triple products $\alpha _{1} abc+\alpha _{2} a^{*}cb^{*}+\alpha _{3} ba^{*}c +\alpha _{4} cab^{*}+\alpha _{5} bca+\alpha _{6} cb^{*}a^{*}$} if
{\allowdisplaybreaks\begin{align}\allowdisplaybreaks\label{fundident}
&\Phi (\alpha _{1} abc+\alpha _{2} a^{*}cb^{*}+\alpha _{3} ba^{*}c +\alpha _{4} cab^{*}+\alpha _{5} bca+\alpha _{6} cb^{*}a^{*})\nonumber \\
&=\alpha _{1}\Phi (a)\Phi (b)\Phi (c)+\alpha _{2}\Phi (a)^{*}\Phi (c)\Phi (b)^{*}+\alpha _{3}\Phi (b)\Phi (a)^{*}\Phi (c)\nonumber \\
&+\alpha _{4}\Phi (c)\Phi (a)\Phi (b)^{*}+\alpha _{5}\Phi (b)\Phi (c)\Phi (a)+\alpha _{6}\Phi (c)\Phi (b)^{*}\Phi (a)^{*},
\end{align}}
for all elements $a,b,c\in \mathcal{A}.$

These kinds of maps are related to nonlinear maps preserving the Jordan (resp. Lie, mixed) triple $\ast $-product which have been studied by many authors (for example, see the works \cite{Li1}, \cite{Li2}, \cite{Li3}, \cite{Li4}, \cite{Zhao} and the references therein). In particular, Li, Chen and Wang \cite{Li1} and Zhao and Li \cite{Zhao} studied the structure of the bijective nonlinear maps preserving Jordan triple $\ast $-products on factor von Neumann algebras. Note that these maps satisfy (\ref{fundident}), for convenient scalars $\alpha _{k}$ $(k=1,2, \cdots ,6).$ Motivated by these results, in this paper we will study the structure of bijective nonlinear maps $\Phi ,$ from a unital prime $\ast $-algebra $\mathcal{A}$ having a nontrivial projection to a unital $\ast $-algebra $\mathcal{B},$ preserving sum of triple products $\alpha _{1} abc+\alpha _{2} a^{*}cb^{*}+\alpha _{3} ba^{*}c +\alpha _{4} cab^{*}+\alpha _{5} bca+\alpha _{6} cb^{*}a^{*},$ where $\{\alpha _{k}\}_{k=1}^{6}$ are complex numbers satisfying certain conditions. At the end of this paper, we make a contribution to the problem of structure characterization of the nonlinear maps preserving triple $\ast $-products, on unital $\ast $-algebras, as originated from the works \cite{Li1} and \cite{Zhao}.  

Our main result reads as follows.

\begin{theorem}\label{thm11} Let $\{\alpha _{k}\}_{k=1}^{6}$ be complex numbers satisfying the condition $|\alpha _{1}+\alpha _{3}+\alpha _{5}|-|\alpha _{2}+\alpha _{4}+\alpha _{6}|\neq 0,$ $\mathcal{A}$ and $\mathcal{B}$ two unital complex $\ast $-algebras with $1_{\mathcal{A}}$ and $1_{\mathcal{B}}$ their multiplicative identities, respectively, and such that $\mathcal{A}$ is prime and has a nontrivial projection. Then every bijective nonlinear map $\Phi :\mathcal{A}\rightarrow \mathcal{B}$ preserving sum of triple products $\alpha _{1} abc+\alpha _{2} a^{*}cb^{*}+\alpha _{3} ba^{*}c +\alpha _{4} cab^{*}+\alpha _{5} bca+\alpha _{6} cb^{*}a^{*}$ is additive. In addition, if (i) $\Phi (1_{\mathcal{A}})$ is a projection of $\mathcal{B}$ and (ii) $\alpha _{2}+\alpha _{4}+\alpha _{6}\neq 0$ and $\Phi ((\alpha _{2}+\alpha _{4}+\alpha _{6})a)=(\alpha _{2}+\alpha _{4}+\alpha _{6})\Phi (a),$ for all element $a\in \mathcal{A},$ then $\Phi $ is a $\ast $-ring isomorphism.
\end{theorem}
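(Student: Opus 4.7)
Write $\beta=\alpha_{1}+\alpha_{3}+\alpha_{5}$ and $\gamma=\alpha_{2}+\alpha_{4}+\alpha_{6}$; the hypothesis $|\beta|\neq|\gamma|$ is equivalent to saying that the $\mathbb{R}$-linear map $T\colon x\mapsto\beta x+\gamma x^{*}$ is injective on every $\ast$-algebra, a fact I will invoke repeatedly. Fix a nontrivial projection $P_{1}\in\mathcal{A}$, put $P_{2}=1_{\mathcal{A}}-P_{1}$, and work with the Peirce decomposition $\mathcal{A}=\bigoplus_{i,j=1}^{2}\mathcal{A}_{ij}$ where $\mathcal{A}_{ij}=P_{i}\mathcal{A}P_{j}$. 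The overall approach follows the well-established template for this family of preserver problems on Peirce-decomposable $\ast$-algebras.

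The preliminary steps will be to verify $\Phi(0)=0$, by inserting $a=b=c=u$ in (\ref{fundident}) for the unique $u$ with $\Phi(u)=0$ and then using bijectivity together with the injectivity of $T$ to force $u=0$; and to record the images of $P_{1}$ and $P_{2}$ by substitutions that collapse most of the six summands via Peirce orthogonality ($P_{i}P_{j}=0$ for $i\neq j$).

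Additivity will then be established in three stages. First I show additivity on each $\mathcal{A}_{ij}$ individually: by choosing $a,b,c$ from specific Peirce components so that five of the six terms of (\ref{fundident}) vanish, the remaining identity forces $\Phi(x+y)=\Phi(x)+\Phi(y)$ for $x,y\in\mathcal{A}_{ij}$. Second---and this is the main technical obstacle---I establish the cross-additivity relation $\Phi\bigl(\sum_{i,j}a_{ij}\bigr)=\sum_{i,j}\Phi(a_{ij})$; this demands a carefully engineered sequence of substitutions in (\ref{fundident}), repeatedly exploiting the injectivity of $T$ and of $\Phi$ to disentangle diagonal from off-diagonal Peirce contributions. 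Third, I combine these two pieces to conclude $\Phi(x+y)=\Phi(x)+\Phi(y)$ for all $x,y\in\mathcal{A}$.

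For the final promotion to a $\ast$-ring isomorphism under hypotheses (i) and (ii), I proceed in two steps. Plugging $a=c=1_{\mathcal{A}}$ into (\ref{fundident}) and using additivity yields $\Phi(\beta b+\gamma b^{*})$ on the left; the right-hand side, evaluated with $\Phi(1_{\mathcal{A}})$ a self-adjoint idempotent, combines with the homogeneity $\Phi(\gamma a)=\gamma\Phi(a)$ and the injectivity of $T$ to deduce $\Phi(b^{*})=\Phi(b)^{*}$, and along the way one identifies $\Phi(1_{\mathcal{A}})$ as a multiplicative identity on the range of $\Phi$. Once $\ast$-preservation is in hand, setting $a=1_{\mathcal{A}}$ in (\ref{fundident}) reduces to $\Phi(\beta bc+\gamma cb^{*})=\beta\Phi(b)\Phi(c)+\gamma\Phi(c)\Phi(b)^{*}$; applying the homogeneity in (ii) and invoking the primeness of $\mathcal{A}$ together with $|\beta|\neq|\gamma|$ then yields $\Phi(bc)=\Phi(b)\Phi(c)$, completing the proof.
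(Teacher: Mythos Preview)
Your overall architecture matches the paper's, but the order of your additivity stages is inverted, and this is a genuine gap rather than a cosmetic choice. In the paper the argument runs the other way: one first establishes the \emph{cross} relations $\Phi(a_{ii}+b_{ij})=\Phi(a_{ii})+\Phi(b_{ij})$ and then the full four-component splitting $\Phi\bigl(\sum_{k,l} a_{kl}\bigr)=\sum_{k,l}\Phi(a_{kl})$, in each case by introducing the defect $u=\Phi^{-1}(\Phi(\cdots)-\sum\Phi(\cdots))$ and feeding $(1_{\mathcal A},u,P_{j})$ and $(1_{\mathcal A},u,P_{i})$ into the identity to obtain $T(u_{ij}+u_{jj})=0$ and $T(u_{ii}+u_{ji})=0$. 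Only \emph{after} that is additivity on a single off-diagonal block $\mathcal A_{ij}$ proved, and that proof genuinely consumes the cross relations: the key substitution is $a=1_{\mathcal A}$, $b=P_{i}+a_{ij}$, $c=P_{j}+b_{ij}$, so that the $abc$-type terms produce $(P_{i}+a_{ij})(P_{j}+b_{ij})=a_{ij}+b_{ij}$; to exploit the preservation identity one must split $\Phi(P_{i}+a_{ij})=\Phi(P_{i})+\Phi(a_{ij})$ on the right, and on the left one must split $\Phi\bigl(\beta(a_{ij}+b_{ij})+\gamma(a_{ij}^{*}+b_{ij}a_{ij}^{*})\bigr)$ across Peirce components. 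Your proposed Stage~1---``choose $a,b,c$ from Peirce components so that five of the six terms vanish, and the remaining identity forces $\Phi(x+y)=\Phi(x)+\Phi(y)$''---cannot produce a sum $x+y$ with $x,y\in\mathcal A_{ij}$: a single surviving triple term is a \emph{product}, and the sum $a_{ij}+b_{ij}$ only arises from products of mixed-Peirce elements such as $P_{i}+a_{ij}$. So Stage~1 as you describe it cannot be completed before Stage~2.

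Two smaller points. Primeness of $\mathcal A$ is not used in the multiplicativity step; in the paper it enters only when proving additivity on the diagonal blocks $\mathcal A_{ii}$ (to pass from $t_{ji}u_{ii}=0$ for all $t_{ji}\in\mathcal A_{ji}$ to $u_{ii}=0$). Multiplicativity is obtained exactly from your identity $\beta\Phi(bc)+\gamma\Phi(cb^{*})=\beta\Phi(b)\Phi(c)+\gamma\Phi(c)\Phi(b)^{*}$ by applying the involution, replacing $c$ by $c^{*}$, and eliminating the $\Phi(cb^{*})$ term via the linear combination $\overline{\beta}\cdot(\text{first})-\gamma\cdot(\text{second})$, using only $|\beta|\neq|\gamma|$. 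Finally, your sketch for $\Phi(0)=0$ via $a=b=c=u$ yields a cubic relation in $u$ and $u^{*}$ that is not of the form $T(x)=0$, so the injectivity of $T$ does not apply there; the clean route is to take $a=u$ with $\Phi(u)=0$ (so the right side vanishes) and then specialise $b=c=0$.
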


\section{The proof of main result}

In order to prove the Theorem \ref{thm11} we need to prove several Claims. We begin with a Claim, whose proof is easy and is omitted.

\begin{claim}\label{s21} $\Phi (0)=0.$
\end{claim}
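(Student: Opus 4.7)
The plan is to exploit the bijectivity of $\Phi$ together with the observation that the defining identity~\eqref{fundident} is ``$\Phi(y)$-multilinear'' on the right in the sense that every one of its six monomials contains each of $\Phi(a)$, $\Phi(b)$, $\Phi(c)$ exactly once. So if we can arrange for any one of these three factors to be $0$, the right-hand side collapses to $0$; independently, setting $a=0$ (or $b=0$, or $c=0$) forces each of the six triple products on the left to vanish, making the left-hand side equal to $\Phi(0)$.

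The concrete choice I would make uses surjectivity: pick $y\in\mathcal{A}$ with $\Phi(y)=0$ (such $y$ exists since $\Phi$ is bijective) and then substitute $(a,b,c)=(0,y,y)$ into~\eqref{fundident}. On the left, each of the six summands
\[
\alpha_1\,0\!\cdot\!y\!\cdot\!y,\quad \alpha_2\,0^{*}\!\cdot\!y\!\cdot\!y^{*},\quad \alpha_3\,y\!\cdot\!0^{*}\!\cdot\!y,\quad \alpha_4\,y\!\cdot\!0\!\cdot\!y^{*},\quad \alpha_5\,y\!\cdot\!y\!\cdot\!0,\quad \alpha_6\,y\!\cdot\!y^{*}\!\cdot\!0^{*}
\]
is $0$, so the argument of $\Phi$ is $0$ and the left-hand side equals $\Phi(0)$. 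On the right, each of the six corresponding products contains a factor $\Phi(y)=0$ (or $\Phi(y)^{*}=0$), so the right-hand side is $0$. Therefore $\Phi(0)=0$.

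There is essentially no obstacle here; the only ``trick'' is noticing that a single substitution can kill both sides simultaneously, by putting $0$ in the slot that vanishes the left while putting $y=\Phi^{-1}(0)$ in the two slots that vanish the right. No structural assumption on $\mathcal{A}$ or $\mathcal{B}$ (primeness, existence of a nontrivial projection, the condition on $\{\alpha_k\}$) is needed for this first claim — only the bijectivity of $\Phi$ and the shape of the identity~\eqref{fundident}.
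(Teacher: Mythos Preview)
Your argument is correct. The paper itself omits the proof of this claim, calling it ``easy,'' so there is nothing to compare against; your substitution $(a,b,c)=(0,y,y)$ with $y=\Phi^{-1}(0)$ does exactly the job and is in the spirit of what the authors presumably had in mind. A minor observation: since every monomial on the right-hand side of~\eqref{fundident} already contains a factor $\Phi(b)$ or $\Phi(b)^{*}$, taking $b=y$ alone (with $c$ arbitrary) would suffice to annihilate the right side, so the choice $c=y$ is redundant---but harmless.
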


The following well known result will be used throughout this paper: Let $p_{1}$ be any nontrivial projection of $\mathcal{A}$ and write $p_{2}=1_{\mathcal{A}}-p_{1}.$ Then $\mathcal{A}$ has a Peirce decomposition $\mathcal{A}=\mathcal{A}_{11}\oplus \mathcal{A}_{12}\oplus \mathcal{A}_{21}\oplus \mathcal{A}_{22},$ where $\mathcal{A}_{ij}=p_{i}\mathcal{A}p_{j}$ $(i,j=1,2) ,$ satisfying the following multiplicative relations: $\mathcal{A}_{ij}\mathcal{A}_{kl}\subseteq \delta _{jk} \mathcal{A}_{il},$ where $\delta _{jk}$ is the {\it Kronecker delta function}.

\begin{claim}\label{s22} For every $a_{ii}\in \mathcal{A}_{ii},$ $b_{ij}\in \mathcal{A}_{ij}$ and $c_{ji}\in \mathcal{A}_{ji}$ $(i\neq j;i,j=1,2)$ we have: (i) $\Phi (a_{ii}+b_{ij})=\Phi (a_{ii})+\Phi (b_{ij})$ and (ii) $\Phi (a_{ii}+c_{ji})=\Phi (a_{ii})+\Phi (c_{ji}).$
\end{claim}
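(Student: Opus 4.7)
The plan is to prove part (i) in detail; part (ii) will follow by the analogous argument with $c_{ji}$ in place of $b_{ij}$. By bijectivity of $\Phi$, choose $t \in \mathcal{A}$ with $\Phi(t) = \Phi(a_{ii}) + \Phi(b_{ij})$, and write its Peirce decomposition $t = t_{11}+t_{12}+t_{21}+t_{22}$. It will suffice to show that $t_{ii} = a_{ii}$, $t_{ij} = b_{ij}$, and $t_{ji} = t_{jj} = 0$.

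Denote by $\sigma(a,b,c)$ the triple expression $\alpha_{1}abc+\alpha_{2}a^{*}cb^{*}+\alpha_{3}ba^{*}c+\alpha_{4}cab^{*}+\alpha_{5}bca+\alpha_{6}cb^{*}a^{*}$ and by $\sigma^{\Phi}$ the right-hand side of (\ref{fundident}). The key remark is that $\sigma^{\Phi}$ is additive in each argument (since the involution is additive), so for all $x,z\in\mathcal{A}$,
\begin{align*}
\Phi(\sigma(x,t,z)) &= \sigma^{\Phi}\!\left(\Phi(x),\,\Phi(a_{ii})+\Phi(b_{ij}),\,\Phi(z)\right)\\
&= \Phi(\sigma(x,a_{ii},z)) + \Phi(\sigma(x,b_{ij},z)),
\end{align*}
with analogous identities when $t$ occupies the first or third slot.

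The strategy is now to feed this relation with carefully chosen pairs $(x,z)$, primarily drawn from $\{p_{1},p_{2}\}$, together with auxiliary choices such as $x=a_{ii}$ or $z=b_{ij}$. For each such test, the Peirce multiplication rules force at least one of $\sigma(x,a_{ii},z),\sigma(x,b_{ij},z)$ to vanish; Claim~\ref{s21} together with injectivity of $\Phi$ then upgrades the displayed equation into an honest algebraic identity in $\mathcal{A}$ involving the components $t_{kl}$. Running through the substitutions systematically (e.g.\ $(x,z)=(p_{j},p_{j})$ to pin down $t_{ij}$ and show $t_{jj}=0$, and combinations like $(p_{i},p_{j})$ and $(p_{j},p_{i})$ for the remaining off-diagonal pieces) will force $t_{ji}=t_{jj}=0$ and $t_{ij}=b_{ij}$.

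I expect the main obstacle to be the diagonal $(i,i)$-block: the substitution $(x,z)=(p_{i},p_{i})$ produces a coupled linear system in $t_{ii}$ and $t_{ii}^{*}$ whose coefficients involve $S:=\alpha_{1}+\alpha_{3}+\alpha_{5}$ and $T:=\alpha_{2}+\alpha_{4}+\alpha_{6}$. The determinant of this $2\times 2$ system will work out to a nonzero multiple of $|S|^{2}-|T|^{2}$, and the hypothesis $|\alpha_{1}+\alpha_{3}+\alpha_{5}|\neq|\alpha_{2}+\alpha_{4}+\alpha_{6}|$ is precisely what guarantees its invertibility, yielding $t_{ii}=a_{ii}$. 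Part (ii) is handled by the same machinery, with the roles of the off-diagonal Peirce slots interchanged.
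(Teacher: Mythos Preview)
Your plan is correct and essentially matches the paper's argument: exploit the additivity of the right-hand side of (\ref{fundident}), test against projections, and invoke the $|S|^{2}-|T|^{2}\neq 0$ determinant to solve the resulting $2\times 2$ system. The paper's framing is the mirror image of yours---it sets $u=\Phi^{-1}(\Phi(a_{ii}+b_{ij})-\Phi(a_{ii})-\Phi(b_{ij}))$ and shows $u=0$---but the mechanism is identical.

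One practical remark on your choice of test triples: the paper places $1_{\mathcal{A}}$ in the \emph{first} slot and tests only with $(1_{\mathcal{A}},\,\cdot\,,p_{j})$ and $(1_{\mathcal{A}},\,\cdot\,,p_{i})$. This is cleaner than your proposed $(p_{k},\,\cdot\,,p_{l})$ tests because it immediately collapses the six coefficients into $S=\alpha_{1}+\alpha_{3}+\alpha_{5}$ and $T=\alpha_{2}+\alpha_{4}+\alpha_{6}$, so that each test yields directly an equation of the form $S\,v+T\,v^{*}=S\,w+T\,w^{*}$ and the determinant argument fires at once. With $p_{k}$ in the first slot you get partial sums like $\alpha_{3}+\alpha_{5}$ or $\alpha_{1}$ alone, and you would need to combine several tests (and a short case split) to recover the full strength of the hypothesis $|S|\neq |T|$. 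It still works, but the $1_{\mathcal{A}}$-in-first-slot choice avoids that detour entirely.
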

\begin{proof} Let $u=u_{ii}+u_{ij}+u_{ji}+u_{jj}=\Phi ^{-1}(\Phi (a_{ii}+b_{ij})-\Phi (a_{ii})-\Phi (b_{ij}))\in \mathcal{A}$ $(i\neq j;i,j=1,2).$ According to the definition of $\Phi ,$ we have
{\allowdisplaybreaks\begin{align*}
&\Phi (\alpha _{1} 1_{\mathcal{A}}up_{j}+\alpha _{2} 1_{\mathcal{A}}^{*}p_{j}u^{*}+\alpha _{3} u1_{\mathcal{A}}^{*}p_{j}+\alpha _{4} p_{j}1_{\mathcal{A}}u^{*}+\alpha _{5} up_{j}1_{\mathcal{A}}+\alpha _{6} p_{j}u^{*}1_{\mathcal{A}}^{*})\\
&=\alpha _{1} \Phi (1_{\mathcal{A}})\Phi (u)\Phi (p_{j})+\alpha _{2} \Phi (1_{\mathcal{A}})^{*}\Phi (p_{j})\Phi (u)^{*}+\alpha _{3} \Phi (u)\Phi (1_{\mathcal{A}})^{*}\Phi (p_{j})\\
&+\alpha _{4} \Phi (p_{j})\Phi (1_{\mathcal{A}})\Phi (u)^{*}+\alpha _{5} \Phi (u)\Phi (p_{j})\Phi (1_{\mathcal{A}})+\alpha _{6} \Phi (p_{j})\Phi (u)^{*}\Phi (1_{\mathcal{A}})^{*}\\
&=\alpha _{1} \Phi (1_{\mathcal{A}})\Phi (a_{ii}+b_{ij})\Phi (p_{j})+\alpha _{2} \Phi (1_{\mathcal{A}})^{*}\Phi (p_{j})\Phi (a_{ii}+b_{ij})^{*}\\
&+\alpha _{3} \Phi (a_{ii}+b_{ij})\Phi (1_{\mathcal{A}})^{*}\Phi (p_{j})+\alpha _{4} \Phi (p_{j})\Phi (1_{\mathcal{A}})\Phi (a_{ii}+b_{ij})^{*}\\
&+\alpha _{5} \Phi (a_{ii}+b_{ij})\Phi (p_{j})\Phi (1_{\mathcal{A}})+\alpha _{6} \Phi (p_{j})\Phi (a_{ii}+b_{ij})^{*}\Phi (1_{\mathcal{A}})^{*}\\
&-\alpha _{1} \Phi (1_{\mathcal{A}})\Phi (a_{ii})\Phi (p_{j})-\alpha _{2} \Phi (1_{\mathcal{A}})^{*}\Phi (p_{j})\Phi (a_{ii})^{*}-\alpha _{3} \Phi (a_{ii})\Phi (1_{\mathcal{A}})^{*}\Phi (p_{j})\\
&-\alpha _{4} \Phi (p_{j})\Phi (1_{\mathcal{A}})\Phi (a_{ii})^{*}-\alpha _{5} \Phi (a_{ii})\Phi (p_{j})\Phi (1_{\mathcal{A}})-\alpha _{6} \Phi (p_{j})\Phi (a_{ii})^{*}\Phi (1_{\mathcal{A}})^{*}\\
&-\alpha _{1} \Phi (1_{\mathcal{A}})\Phi (b_{ij})\Phi (p_{j})-\alpha _{2} \Phi (1_{\mathcal{A}})^{*}\Phi (p_{j})\Phi (b_{ij})^{*}-\alpha _{3} \Phi (b_{ij})\Phi (1_{\mathcal{A}})^{*}\Phi (p_{j})\\
&-\alpha _{4} \Phi (p_{j})\Phi (1_{\mathcal{A}})\Phi (b_{ij})^{*}-\alpha _{5} \Phi (b_{ij})\Phi (p_{j})\Phi (1_{\mathcal{A}})-\alpha _{6} \Phi (p_{j})\Phi (b_{ij})^{*}\Phi (1_{\mathcal{A}})^{*}\\
&=\Phi (\alpha _{1} 1_{\mathcal{A}}(a_{ii}+b_{ij})p_{j}+\alpha _{2} 1_{\mathcal{A}}^{*}p_{j}(a_{ii}+b_{ij})^{*}+\alpha _{3} (a_{ii}+b_{ij})1_{\mathcal{A}}^{*}p_{j}\\
&+\alpha _{4} p_{j}1_{\mathcal{A}}(a_{ii}+b_{ij})^{*}+\alpha _{5} (a_{ii}+b_{ij})p_{j}1_{\mathcal{A}}+\alpha _{6} p_{j}(a_{ii}+b_{ij})^{*}1_{\mathcal{A}}^{*})\\
&-\Phi (\alpha _{1} 1_{\mathcal{A}}a_{ii}p_{j}+\alpha _{2} 1_{\mathcal{A}}^{*}p_{j}a_{ii}^{*}+\alpha _{3} a_{ii}1_{\mathcal{A}}^{*}p_{j}+\alpha _{4} p_{j}1_{\mathcal{A}}a_{ii}^{*}+\alpha _{5} a_{ii}p_{j}1_{\mathcal{A}}\\
&+\alpha _{6} p_{j}a_{ii}^{*}1_{\mathcal{A}}^{*})-\Phi (\alpha _{1} 1_{\mathcal{A}}b_{ij}p_{j}+\alpha _{2} 1_{\mathcal{A}}^{*}p_{j}b_{ij}^{*}+\alpha _{3} b_{ij}1_{\mathcal{A}}^{*}p_{j}+\alpha _{4} p_{j}1_{\mathcal{A}}b_{ij}^{*}\\
&+\alpha _{5} b_{ij}p_{j}1_{\mathcal{A}}+\alpha _{6} p_{j}b_{ij}^{*}1_{\mathcal{A}}^{*})=0.
\end{align*}}
Since $\Phi $ is injective we deduce that $\alpha _{1} 1_{\mathcal{A}}up_{j}+\alpha _{2} 1_{\mathcal{A}}^{*}p_{j}u^{*}+\alpha _{3} u1_{\mathcal{A}}^{*}p_{j}+\alpha _{4} p_{j}1_{\mathcal{A}}u^{*}+\alpha _{5} up_{j}1_{\mathcal{A}}+\alpha _{6} p_{j}u^{*}1_{\mathcal{A}}^{*}=0.$ It follows that $(\alpha _{1} +\alpha _{3} +\alpha _{5})u_{ij}+(\alpha _{2} +\alpha _{4} +\alpha _{6})u_{ij}^{*}+(\alpha _{1} +\alpha _{3} +\alpha _{5})u_{jj}+(\alpha _{2} +\alpha _{4} +\alpha _{6})u_{jj}^{*}=0 \inlineeqnum\label{eqn:idt2}.$ Next, applying the involution $\ast $ to the identity (\ref{eqn:idt2}) we get $(\overline{\alpha _{2} +\alpha _{4} +\alpha _{6}})u_{ij}+(\overline{\alpha _{1} +\alpha _{3} +\alpha _{5}})u_{ij}^{*}+(\overline{\alpha _{2} +\alpha _{4} +\alpha _{6}})u_{jj}+(\overline{\alpha _{1} +\alpha _{3} +\alpha _{5}})u_{jj}^{*}=0 \inlineeqnum\label{eqn:idt3}.$ Also, multiplying (\ref{eqn:idt2}) by the scalar $(\overline{\alpha _{1}+\alpha _{3}+\alpha _{5}}),$ (\ref{eqn:idt3}) by the scalar $(\alpha _{2}+\alpha _{4}+\alpha _{6})$ and subtracting the resulting identities, we arrive at $(|\alpha _{1}+\alpha _{3}+\alpha _{5}|^{2}-|\alpha _{2} +\alpha _{4} +\alpha _{6}|^ {2})(u_{ij}+u_{jj})=0.$ This shows that $u_{ij}+u_{jj}=0$ which results that $u_{ij}=0$ and $u_{jj}=0,$ by directness of the Peirce decomposition. Now, we have 
{\allowdisplaybreaks\begin{align*}
&\Phi (\alpha _{1} 1_{\mathcal{A}}up_{i}+\alpha _{2} 1_{\mathcal{A}}^{*}p_{i}u^{*}+\alpha _{3} u1_{\mathcal{A}}^{*}p_{i}+\alpha _{4} p_{i}1_{\mathcal{A}}u^{*}+\alpha _{5} up_{i}1_{\mathcal{A}}+\alpha _{6} p_{i}u^{*}1_{\mathcal{A}}^{*})\\
&=\alpha _{1} \Phi (1_{\mathcal{A}})\Phi (u)\Phi (p_{i})+\alpha _{2} \Phi (1_{\mathcal{A}})^{*}\Phi (p_{i})\Phi (u)^{*}+\alpha _{3} \Phi (u)\Phi (1_{\mathcal{A}})^{*}\Phi (p_{i})\\
&+\alpha _{4} \Phi (p_{i})\Phi (1_{\mathcal{A}})\Phi (u)^{*}+\alpha _{5} \Phi (u)\Phi (p_{i})\Phi (1_{\mathcal{A}})+\alpha _{6} \Phi (p_{i})\Phi (u)^{*}\Phi (1_{\mathcal{A}})^{*}\\
&=\alpha _{1} \Phi (1_{\mathcal{A}})\Phi (a_{ii}+b_{ij})\Phi (p_{i})+\alpha _{2} \Phi (1_{\mathcal{A}})^{*}\Phi (p_{i})\Phi (a_{ii}+b_{ij})^{*}\\
&+\alpha _{3} \Phi (a_{ii}+b_{ij})\Phi (1_{\mathcal{A}})^{*}\Phi (p_{i})+\alpha _{4} \Phi (p_{i})\Phi (1_{\mathcal{A}})\Phi (a_{ii}+b_{ij})^{*}\\
&+\alpha _{5} \Phi (a_{ii}+b_{ij})\Phi (p_{i})\Phi (1_{\mathcal{A}})+\alpha _{6} \Phi (p_{i})\Phi (a_{ii}+b_{ij})^{*}\Phi (1_{\mathcal{A}})^{*}\\
&-\alpha _{1} \Phi (1_{\mathcal{A}})\Phi (a_{ii})\Phi (p_{i})-\alpha _{2} \Phi (1_{\mathcal{A}})^{*}\Phi (p_{i})\Phi (a_{ii})^{*}-\alpha _{3} \Phi (a_{ii})\Phi (1_{\mathcal{A}})^{*}\Phi (p_{i})\\
&-\alpha _{4} \Phi (p_{i})\Phi (1_{\mathcal{A}})\Phi (a_{ii})^{*}-\alpha _{5} \Phi (a_{ii})\Phi (p_{i})\Phi (1_{\mathcal{A}})-\alpha _{6} \Phi (p_{i})\Phi (a_{ii})^{*}\Phi (1_{\mathcal{A}})^{*}\\
&-\alpha _{1} \Phi (1_{\mathcal{A}})\Phi (b_{ij})\Phi (p_{i})-\alpha _{2} \Phi (1_{\mathcal{A}})^{*}\Phi (p_{i})\Phi (b_{ij})^{*}-\alpha _{3} \Phi (b_{ij})\Phi (1_{\mathcal{A}})^{*}\Phi (p_{i})\\
&-\alpha _{4} \Phi (p_{i})\Phi (1_{\mathcal{A}})\Phi (b_{ij})^{*}-\alpha _{5} \Phi (b_{ij})\Phi (p_{i})\Phi (1_{\mathcal{A}})-\alpha _{6} \Phi (p_{i})\Phi (b_{ij})^{*}\Phi (1_{\mathcal{A}})^{*}\\
&=\Phi (\alpha _{1} 1_{\mathcal{A}}(a_{ii}+b_{ij})p_{i}+\alpha _{2} 1_{\mathcal{A}}^{*}p_{i}(a_{ii}+b_{ij})^{*}+\alpha _{3} (a_{ii}+b_{ij})1_{\mathcal{A}}^{*}p_{i}\\
&+\alpha _{4} p_{i}1_{\mathcal{A}}(a_{ii}+b_{ij})^{*}+\alpha _{5} (a_{ii}+b_{ij})p_{i}1_{\mathcal{A}}+\alpha _{6} p_{i}(a_{ii}+b_{ij})^{*}1_{\mathcal{A}}^{*})\\
&-\Phi (\alpha _{1} 1_{\mathcal{A}}a_{ii}p_{i}+\alpha _{2} 1_{\mathcal{A}}^{*}p_{i}a_{ii}^{*}+\alpha _{3} a_{ii}1_{\mathcal{A}}^{*}p_{i}+\alpha _{4} p_{i}1_{\mathcal{A}}a_{ii}^{*}+\alpha _{5} a_{ii}p_{i}1_{\mathcal{A}}\\
&+\alpha _{6} p_{i}a_{ii}^{*}1_{\mathcal{A}}^{*})-\Phi (\alpha _{1} 1_{\mathcal{A}}b_{ij}p_{i}+\alpha _{2} 1_{\mathcal{A}}^{*}p_{i}b_{ij}^{*}+\alpha _{3} b_{ij}1_{\mathcal{A}}^{*}p_{i}+\alpha _{4} p_{i}1_{\mathcal{A}}b_{ij}^{*}\\
&+\alpha _{5} b_{ij}p_{i}1_{\mathcal{A}}+\alpha _{6} p_{i}b_{ij}^{*}1_{\mathcal{A}}^{*})=0
\end{align*}}
which leads directly to identity $\alpha _{1} 1_{\mathcal{A}}up_{i}+\alpha _{2} 1_{\mathcal{A}}^{*}p_{i}u^{*}+\alpha _{3} u1_{\mathcal{A}}^{*}p_{i}+\alpha _{4} p_{i}1_{\mathcal{A}}u^{*}+\alpha _{5} up_{i}1_{\mathcal{A}}+\alpha _{6} p_{i}u^{*}1_{\mathcal{A}}^{*}=0.$ This imples that $(\alpha _{1}+\alpha _{3}+\alpha _{5})u_{ii}+(\alpha _{2}+\alpha _{4}+\alpha _{6})u_{ii}^{*}+(\alpha _{1}+\alpha _{3}+\alpha _{5})u_{ji}+(\alpha _{2}+\alpha _{4}+\alpha _{6})u_{ji}^{*}=0 \inlineeqnum\label{eqn:idt4}.$ Next, applying the involution $\ast $ to the identity (\ref{eqn:idt4}) we get $(\overline{\alpha _{2}+\alpha _{4}+\alpha _{6}})u_{ii}+(\overline{\alpha _{1}+\alpha _{3}+\alpha _{5}})u_{ii}^{*}+(\overline{\alpha _{2}+\alpha _{4}+\alpha _{6}})u_{ji}+(\overline{\alpha _{1}+\alpha _{3}+\alpha _{5}})u_{ji}^{*}=0 \inlineeqnum\label{eqn:idt5}.$ Also, multiplying (\ref{eqn:idt4}) by the scalar $(\overline{\alpha _{1}+\alpha _{3}+\alpha _{5}}),$ (\ref{eqn:idt5}) by the scalar $(\alpha _{2}+\alpha _{4}+\alpha _{6})$ and subtracting the resulting identities, we obtain $(|\alpha _{1}+\alpha _{3}+\alpha _{5}|^{2}-|\alpha _{2}+\alpha _{4}+\alpha _{6}|^ {2})(u_{ii}+u_{ji})=0.$ This results that $u_{ii}+u_{ji}=0$ which shows that $u_{ii}=0$ and $u_{ji}=0.$ As a consequence, we conclude that $u=0.$  Therefore the Claim is proved.

The proof of (ii) follows from similar arguments.
\end{proof}

\begin{claim}\label{s23}  For every $a_{ii}\in \mathcal{A}_{ii}$, $b_{ij}\in \mathcal{A}_{ij}$, $c_{ji}\in \mathcal{A}_{ji}$ and  $d_{jj}\in \mathcal{A}_{jj}$ $(i\neq j;i,j=1,2)$ we have: $\Phi (a_{ii}+b_{ij}+c_{ji}+d_{jj})=\Phi (a_{ii})+\Phi (b_{ij})+\Phi (c_{ji})+\Phi (d_{jj}).$
\end{claim}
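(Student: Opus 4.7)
My plan is to follow the template of the proof of Claim \ref{s22}, with one critical extra ingredient. I will set
\[ u = \Phi^{-1}\bigl(\Phi(x) - \Phi(a_{ii}) - \Phi(b_{ij}) - \Phi(c_{ji}) - \Phi(d_{jj})\bigr), \]
where $x = a_{ii}+b_{ij}+c_{ji}+d_{jj}$, and Peirce-decompose $u = u_{11}+u_{12}+u_{21}+u_{22}$. The aim is to show $u=0$; this will follow by establishing $T(1_{\mathcal{A}},u,p_j) = 0$ and $T(1_{\mathcal{A}},u,p_i) = 0$, each of which (via the same linear-algebra step as in Claim \ref{s22}, using $|\alpha_{1}+\alpha_{3}+\alpha_{5}|^{2} \neq |\alpha_{2}+\alpha_{4}+\alpha_{6}|^{2}$ together with Peirce directness) forces two Peirce components of $u$ to vanish.

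The heart of the argument, and what I expect to be the main obstacle, is obtaining $T(1_{\mathcal{A}},u,p_j) = 0$. Computing $\Phi(T(1_{\mathcal{A}},u,p_j))$ in two ways via (\ref{fundident}) --- directly, and by substituting the defining formula for $\Phi(u)$ and then applying (\ref{fundident}) in reverse --- yields
\[ \Phi(T(1_{\mathcal{A}},u,p_j)) = \Phi(T(1_{\mathcal{A}},x,p_j)) - \sum_{y\in\{a_{ii},b_{ij},c_{ji},d_{jj}\}} \Phi(T(1_{\mathcal{A}},y,p_j)). \]
Peirce arithmetic kills the $a_{ii}$ and $c_{ji}$ summands, but unlike the situation in Claim \ref{s22} two nontrivial contributions survive, and a naive attempt to finish would appear to require additivity of $\Phi$ across three Peirce blocks, which Claim \ref{s22} does not supply. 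The observation that breaks this impasse is that $xp_j = b_{ij}+d_{jj}$ and $p_j x^{*} = (b_{ij}+d_{jj})^{*}$, so that
\[ T(1_{\mathcal{A}},x,p_j) = T(1_{\mathcal{A}}, b_{ij}+d_{jj}, p_j). \]
Since $b_{ij}+d_{jj}$ is a two-Peirce-block sum covered by Claim \ref{s22}(ii) with $i$ and $j$ swapped --- giving $\Phi(b_{ij}+d_{jj}) = \Phi(b_{ij})+\Phi(d_{jj})$ --- I will expand $\Phi(T(1_{\mathcal{A}}, b_{ij}+d_{jj}, p_j))$ via (\ref{fundident}), substitute this additivity into the middle slot, and re-apply (\ref{fundident}) in reverse to obtain $\Phi(T(1_{\mathcal{A}},x,p_j)) = \Phi(T(1_{\mathcal{A}},b_{ij},p_j)) + \Phi(T(1_{\mathcal{A}},d_{jj},p_j))$. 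Substituted back into the displayed equality, this collapses the right-hand side to zero and gives $T(1_{\mathcal{A}},u,p_j) = 0$ by injectivity of $\Phi$.

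A completely symmetric argument with $p_i$ in place of $p_j$ --- this time using $T(1_{\mathcal{A}},x,p_i) = T(1_{\mathcal{A}}, a_{ii}+c_{ji}, p_i)$ and Claim \ref{s22}(ii) applied directly to $\Phi(a_{ii}+c_{ji}) = \Phi(a_{ii})+\Phi(c_{ji})$ --- yields $T(1_{\mathcal{A}},u,p_i) = 0$. Expanding both identities in Peirce components and running the $|\alpha_{1}+\alpha_{3}+\alpha_{5}|^{2} - |\alpha_{2}+\alpha_{4}+\alpha_{6}|^{2}$ argument of Claim \ref{s22} produces $u_{ij} = u_{jj} = 0$ and $u_{ii} = u_{ji} = 0$ respectively, whence $u=0$ and the claim is proved.
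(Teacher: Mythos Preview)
Your proposal is correct and follows essentially the same approach as the paper: define $u$ via $\Phi^{-1}$, use Claim~\ref{s22} to supply the needed partial additivity, show $T(1_{\mathcal{A}},u,p_j)=T(1_{\mathcal{A}},u,p_i)=0$, and then run the $|\alpha_1+\alpha_3+\alpha_5|^2-|\alpha_2+\alpha_4+\alpha_6|^2$ Peirce argument. The only cosmetic difference is that the paper invokes Claim~\ref{s22} at the outset to rewrite $\Phi(u)=\Phi(x)-\Phi(a_{ii}+c_{ji})-\Phi(b_{ij}+d_{jj})$ (so the three resulting triple products cancel directly), whereas you keep the four-term expression for $\Phi(u)$, observe that the $a_{ii}$ and $c_{ji}$ contributions vanish, and then apply Claim~\ref{s22} inside $\Phi(T(1_{\mathcal{A}},x,p_j))=\Phi(T(1_{\mathcal{A}},b_{ij}+d_{jj},p_j))$; the logical content is identical.
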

\begin{proof} Let $u=u_{ii}+u_{ij}+u_{ji}+u_{jj}=\Phi ^{-1}(\Phi (a_{ii}+b_{ij}+c_{ji}+d_{jj})-\Phi (a_{ii})-\Phi (b_{ij})-\Phi (c_{ji})-\Phi (d_{jj}))=\Phi ^{-1}(\Phi (a_{ii}+b_{ij}+c_{ji}+d_{jj})-\Phi (a_{ii}+c_{ji})-\Phi (b_{ij}+d_{jj}))\in \mathcal{A},$ by Claim \ref{s22}. Then
{\allowdisplaybreaks\begin{align*}
&\Phi (\alpha _{1} 1_{\mathcal{A}}up_{j}+\alpha _{2} 1_{\mathcal{A}}^{*}p_{j}u^{*}+\alpha _{3} u1_{\mathcal{A}}^{*}p_{j}+\alpha _{4} p_{j}1_{\mathcal{A}}u^{*}+\alpha _{5} up_{j}1_{\mathcal{A}}+\alpha _{6} p_{j}u^{*}1_{\mathcal{A}}^{*})\\
&=\alpha _{1} \Phi (1_{\mathcal{A}})\Phi (u)\Phi (p_{j})+\alpha _{2} \Phi (1_{\mathcal{A}})^{*}\Phi (p_{j})\Phi (u)^{*}+\alpha _{3} \Phi (u)\Phi (1_{\mathcal{A}})^{*}\Phi (p_{j})\\
&+\alpha _{4} \Phi (p_{j})\Phi (1_{\mathcal{A}})\Phi (u)^{*}+\alpha _{5} \Phi (u)\Phi (p_{j})\Phi (1_{\mathcal{A}})+\alpha _{6} \Phi (p_{j})\Phi (u)^{*}\Phi (1_{\mathcal{A}})^{*}\\
&=\alpha _{1} \Phi (1_{\mathcal{A}})\Phi (a_{ii}+b_{ij}+c_{ji}+d_{jj})\Phi (p_{j})\\
&+\alpha _{2} \Phi (1_{\mathcal{A}})^{*}\Phi (p_{j})\Phi (a_{ii}+b_{ij}+c_{ji}+d_{jj})^{*}\\
&+\alpha _{3} \Phi (a_{ii}+b_{ij}+c_{ji}+d_{jj})\Phi (1_{\mathcal{A}})^{*}\Phi (p_{j})\\
&+\alpha _{4} \Phi (p_{j})\Phi (1_{\mathcal{A}})\Phi (a_{ii}+b_{ij}+c_{ji}+d_{jj})^{*}\\
&+\alpha _{5} \Phi (a_{ii}+b_{ij}+c_{ji}+d_{jj})\Phi (p_{j})\Phi (1_{\mathcal{A}})\\
&+\alpha _{6} \Phi (p_{j})\Phi (a_{ii}+b_{ij}+c_{ji}+d_{jj})^{*}\Phi (1_{\mathcal{A}})^{*}\\
&-\alpha _{1} \Phi (1_{\mathcal{A}})\Phi (a_{ii}+c_{ji})\Phi (p_{j})-\alpha _{2} \Phi (1_{\mathcal{A}})^{*}\Phi (p_{j})\Phi (a_{ii}+c_{ji})^{*}\\
&-\alpha _{3} \Phi (a_{ii}+c_{ji})\Phi (1_{\mathcal{A}})^{*}\Phi (p_{j})-\alpha _{4} \Phi (p_{j})\Phi (1_{\mathcal{A}})\Phi (a_{ii}+c_{ji})^{*}\\
&-\alpha _{5} \Phi (a_{ii}+c_{ji})\Phi (p_{j})\Phi (1_{\mathcal{A}})-\alpha _{6} \Phi (p_{j})\Phi (a_{ii}+c_{ji})^{*}\Phi (1_{\mathcal{A}})^{*}\\
&-\alpha _{1} \Phi (1_{\mathcal{A}})\Phi (b_{ij}+d_{jj})\Phi (p_{j})-\alpha _{2} \Phi (1_{\mathcal{A}})^{*}\Phi (p_{j})\Phi (b_{ij}+d_{jj})^{*}\\
&-\alpha _{3} \Phi (b_{ij}+d_{jj})\Phi (1_{\mathcal{A}})^{*}\Phi (p_{j})-\alpha _{4} \Phi (p_{j})\Phi (1_{\mathcal{A}})\Phi (b_{ij}+d_{jj})^{*}\\
&-\alpha _{5} \Phi (b_{ij}+d_{jj})\Phi (p_{j})\Phi (1_{\mathcal{A}})-\alpha _{6} \Phi (p_{j})\Phi (b_{ij}+d_{jj})^{*}\Phi (1_{\mathcal{A}})^{*}\\
&=\Phi (\alpha _{1} 1_{\mathcal{A}}(a_{ii}+b_{ij}+c_{ji}+d_{jj})p_{j}+\alpha _{2} 1_{\mathcal{A}}^{*}p_{j}(a_{ii}+b_{ij}+c_{ji}+d_{jj})^{*}\\
&+\alpha _{3} (a_{ii}+b_{ij}+c_{ji}+d_{jj})1_{\mathcal{A}}^{*}p_{j}+\alpha _{4} p_{j}1_{\mathcal{A}}(a_{ii}+b_{ij}+c_{ji}+d_{jj})^{*}\\
&+\alpha _{5} (a_{ii}+b_{ij}+c_{ji}+d_{jj})p_{j}1_{\mathcal{A}}+\alpha _{6} p_{j}(a_{ii}+b_{ij}+c_{ji}+d_{jj})^{*}1_{\mathcal{A}}^{*})\\
&-\Phi (\alpha _{1} 1_{\mathcal{A}}(a_{ii}+c_{ji})p_{j}+\alpha _{2} 1_{\mathcal{A}}^{*}p_{j}(a_{ii}+c_{ji})^{*}+\alpha _{3} (a_{ii}+c_{ji})1_{\mathcal{A}}^{*}p_{j}\\
&+\alpha _{4} p_{j}1_{\mathcal{A}}(a_{ii}+c_{ji})^{*}+\alpha _{5} (a_{ii}+c_{ji})p_{j}1_{\mathcal{A}}+\alpha _{6} p_{j}(a_{ii}+c_{ji})^{*}1_{\mathcal{A}}^{*})\\
&-\Phi (\alpha _{1} 1_{\mathcal{A}}(b_{ij}+d_{jj})p_{j}+\alpha _{2} 1_{\mathcal{A}}^{*}p_{j}(b_{ij}+d_{jj})^{*}+\alpha _{3} (b_{ij}+d_{jj})1_{\mathcal{A}}^{*}p_{j}\\
&+\alpha _{4} p_{j}1_{\mathcal{A}}(b_{ij}+d_{jj})^{*}+\alpha _{5} (b_{ij}+d_{jj})p_{j}1_{\mathcal{A}}+\alpha _{6} p_{j}(b_{ij}+d_{jj})^{*}1_{\mathcal{A}}^{*})=0.
\end{align*}}
This implies that $\alpha _{1} 1_{\mathcal{A}}up_{j}+\alpha _{2} 1_{\mathcal{A}}^{*}p_{j}u^{*}+\alpha _{3} u1_{\mathcal{A}}^{*}p_{j}+\alpha _{4} p_{j}1_{\mathcal{A}}u^{*}+\alpha _{5} up_{j}1_{\mathcal{A}}+\alpha _{6} p_{j}u^{*}1_{\mathcal{A}}^{*}=0.$ It follows that $(\alpha _{1} +\alpha _{3} +\alpha _{5})(u_{ij}+u_{jj})+(\alpha _{2} +\alpha _{4} +\alpha _{6})(u_{ij}+u_{jj})^{*}=0 \inlineeqnum\label{eqn:idt6}.$ Also, by application the involution $\ast $ on (\ref{eqn:idt6}) we obtain the identity $(\overline{\alpha _{2} +\alpha _{4} +\alpha _{6}})(u_{ij}+u_{jj})+(\overline{\alpha _{1} +\alpha _{3} +\alpha _{5}})(u_{ij}+u_{jj})^{*}=0 \inlineeqnum\label{eqn:idt7}.$ Thus, multiplying (\ref{eqn:idt6}) by the scalar $(\overline{\alpha _{1}+\alpha _{3}+\alpha _{5}}),$ (\ref{eqn:idt7}) by the scalar $(\alpha _{2}+\alpha _{4}+\alpha _{6})$ and subtracting the resulting identities, we arrive at $(|\alpha _{1}+\alpha _{3}+\alpha _{5}|^{2}-|\alpha _{2}+\alpha _{4}+\alpha _{6}|^ {2})(u_{ij}+u_{jj})=0$ which leads to $u_{ij}=0$ and $u_{jj}=0.$ Next, we have 
{\allowdisplaybreaks\begin{align*}
&\Phi (\alpha _{1} 1_{\mathcal{A}}up_{i}+\alpha _{2} 1_{\mathcal{A}}^{*}p_{i}u^{*}+\alpha _{3} u1_{\mathcal{A}}^{*}p_{i}+\alpha _{4} p_{i}1_{\mathcal{A}}u^{*}+\alpha _{5} up_{i}1_{\mathcal{A}}+\alpha _{6} p_{i}u^{*}1_{\mathcal{A}}^{*})\\
&=\alpha _{1} \Phi (1_{\mathcal{A}})\Phi (u)\Phi (p_{i})+\alpha _{2} \Phi (1_{\mathcal{A}})^{*}\Phi (p_{i})\Phi (u)^{*}+\alpha _{3} \Phi (u)\Phi (1_{\mathcal{A}})^{*}\Phi (p_{i})\\
&+\alpha _{4} \Phi (p_{i})\Phi (1_{\mathcal{A}})\Phi (u)^{*}+\alpha _{5} \Phi (u)\Phi (p_{i})\Phi (1_{\mathcal{A}})+\alpha _{6} \Phi (p_{i})\Phi (u)^{*}\Phi (1_{\mathcal{A}})^{*}\\
&=\alpha _{1} \Phi (1_{\mathcal{A}})\Phi (a_{ii}+b_{ij}+c_{ji}+d_{jj})\Phi (p_{i})\\
&+\alpha _{2} \Phi (1_{\mathcal{A}})^{*}\Phi (p_{i})\Phi (a_{ii}+b_{ij}+c_{ji}+d_{jj})^{*}\\
&+\alpha _{3} \Phi (a_{ii}+b_{ij}+c_{ji}+d_{jj})\Phi (1_{\mathcal{A}})^{*}\Phi (p_{i})\\
&+\alpha _{4} \Phi (p_{i})\Phi (1_{\mathcal{A}})\Phi (a_{ii}+b_{ij}+c_{ji}+d_{jj})^{*}\\
&+\alpha _{5} \Phi (a_{ii}+b_{ij}+c_{ji}+d_{jj})\Phi (p_{i})\Phi (1_{\mathcal{A}})\\
&+\alpha _{6} \Phi (p_{i})\Phi (a_{ii}+b_{ij}+c_{ji}+d_{jj})^{*}\Phi (1_{\mathcal{A}})^{*}\\
&-\alpha _{1} \Phi (1_{\mathcal{A}})\Phi (a_{ii}+c_{ji})\Phi (p_{i})-\alpha _{2} \Phi (1_{\mathcal{A}})^{*}\Phi (p_{i})\Phi (a_{ii}+c_{ji})^{*}\\
&-\alpha _{3} \Phi (a_{ii}+c_{ji})\Phi (1_{\mathcal{A}})^{*}\Phi (p_{i})-\alpha _{4} \Phi (p_{i})\Phi (1_{\mathcal{A}})\Phi (a_{ii}+c_{ji})^{*}\\
&-\alpha _{5} \Phi (a_{ii}+c_{ji})\Phi (p_{i})\Phi (1_{\mathcal{A}})-\alpha _{6} \Phi (p_{i})\Phi (a_{ii}+c_{ji})^{*}\Phi (1_{\mathcal{A}})^{*}\\
&-\alpha _{1} \Phi (1_{\mathcal{A}})\Phi (b_{ij}+d_{jj})\Phi (p_{i})-\alpha _{2} \Phi (1_{\mathcal{A}})^{*}\Phi (p_{i})\Phi (b_{ij}+d_{jj})^{*}\\
&-\alpha _{3} \Phi (b_{ij}+d_{jj})\Phi (1_{\mathcal{A}})^{*}\Phi (p_{i})-\alpha _{4} \Phi (p_{i})\Phi (1_{\mathcal{A}})\Phi (b_{ij}+d_{jj})^{*}\\
&-\alpha _{5} \Phi (b_{ij}+d_{jj})\Phi (p_{i})\Phi (1_{\mathcal{A}})-\alpha _{6} \Phi (p_{i})\Phi (b_{ij}+d_{jj})^{*}\Phi (1_{\mathcal{A}})^{*}\\
&=\Phi (\alpha _{1} 1_{\mathcal{A}}(a_{ii}+b_{ij}+c_{ji}+d_{jj})p_{i}+\alpha _{2} 1_{\mathcal{A}}^{*}p_{i}(a_{ii}+b_{ij}+c_{ji}+d_{jj})^{*}\\
&+\alpha _{3} (a_{ii}+b_{ij}+c_{ji}+d_{jj})1_{\mathcal{A}}^{*}p_{i}+\alpha _{4} p_{i}1_{\mathcal{A}}(a_{ii}+b_{ij}+c_{ji}+d_{jj})^{*}\\
&+\alpha _{5} (a_{ii}+b_{ij}+c_{ji}+d_{jj})p_{i}1_{\mathcal{A}}+\alpha _{6} p_{i}(a_{ii}+b_{ij}+c_{ji}+d_{jj})^{*}1_{\mathcal{A}}^{*})\\
&-\Phi (\alpha _{1} 1_{\mathcal{A}}(a_{ii}+c_{ji})p_{i}+\alpha _{2} 1_{\mathcal{A}}^{*}p_{i}(a_{ii}+c_{ji})^{*}+\alpha _{3} (a_{ii}+c_{ji})1_{\mathcal{A}}^{*}p_{i}\\
&+\alpha _{4} p_{i}1_{\mathcal{A}}(a_{ii}+c_{ji})^{*}+\alpha _{5} (a_{ii}+c_{ji})p_{i}1_{\mathcal{A}}+\alpha _{6} p_{i}(a_{ii}+c_{ji})^{*}1_{\mathcal{A}}^{*})\\
&-\Phi (\alpha _{1} 1_{\mathcal{A}}(b_{ij}+d_{jj})p_{i}+\alpha _{2} 1_{\mathcal{A}}^{*}p_{i}(b_{ij}+d_{jj})^{*}+\alpha _{3} (b_{ij}+d_{jj})1_{\mathcal{A}}^{*}p_{i}\\
&+\alpha _{4} p_{i}1_{\mathcal{A}}(b_{ij}+d_{jj})^{*}+\alpha _{5} (b_{ij}+d_{jj})p_{i}1_{\mathcal{A}}+\alpha _{6} p_{i}(b_{ij}+d_{jj})^{*}1_{\mathcal{A}}^{*})\\
&=0
\end{align*}}
from which we immediately deduce the identity $\alpha _{1} 1_{\mathcal{A}}up_{i}+\alpha _{2} 1_{\mathcal{A}}^{*}p_{i}u^{*}+\alpha _{3} u1_{\mathcal{A}}^{*}p_{i}+\alpha _{4} p_{i}1_{\mathcal{A}}u^{*}+\alpha _{5} up_{i}1_{\mathcal{A}}+\alpha _{6} p_{i}u^{*}1_{\mathcal{A}}^{*}=0.$ This results in the identity $(\alpha _{1} +\alpha _{3} +\alpha _{5})(u_{ii}+u_{ji})+(\alpha _{2} +\alpha _{4} +\alpha _{6})(u_{ii}+u_{ji})^{*}=0 \inlineeqnum\label{eqn:idt8}.$ Also, we get $(\overline{\alpha _{2} +\alpha _{4} +\alpha _{6}})(u_{ii}+u_{ji})+(\overline{\alpha _{1} +\alpha _{3} +\alpha _{5}})(u_{ii}+u_{ji})^{*}=0 \inlineeqnum\label{eqn:idt9},$ by the application of the involution $\ast $ on (\ref{eqn:idt8}). As a consequence, multiplying (\ref{eqn:idt8}) by the scalar $(\overline{\alpha _{1}+\alpha _{3}+\alpha _{5}}),$ (\ref{eqn:idt9}) by the scalar $(\alpha _{2}+\alpha _{4}+\alpha _{6})$ and subtracting the resulting identities, we arrive at $(|\alpha _{1}+\alpha _{3}+\alpha _{5}|^{2}-|\alpha _{2}+\alpha _{4}+\alpha _{6}|^ {2})(u_{ii}+u_{ji})=0$ which shows that $u_{ii}+u_{ji}=0.$ Consequently, we obtain $u_{ii}=0$ and $u_{ji}=0.$
\end{proof}

\begin{claim}\label{s24} For every $a_{ij},b_{ij}\in \mathcal{A}_{ij}$ $(i\neq j;i,j=1,2)$ we have: $\Phi (a_{ij}+b_{ij})=\Phi (a_{ij})+ \Phi (b_{ij}).$
\end{claim}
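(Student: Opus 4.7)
The plan is to mirror the template of Claims~\ref{s22} and~\ref{s23}: define
\[
u:=\Phi^{-1}\bigl(\Phi(a_{ij}+b_{ij})-\Phi(a_{ij})-\Phi(b_{ij})\bigr),
\]
write the Peirce decomposition $u=u_{ii}+u_{ij}+u_{ji}+u_{jj}$, and show that each component vanishes. Since $\Phi $ is injective with $\Phi (0)=0$, this is equivalent to the claim. Throughout I abbreviate the left-hand side of (\ref{fundident}) by $T(a,b,c)$.

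The first step runs exactly as in Claim~\ref{s22}: apply (\ref{fundident}) to $(a,b,c)=(1_{\mathcal{A}},u,p_i)$. Because $a_{ij}p_i=b_{ij}p_i=0$ and $p_ia_{ij}^{*}=p_ib_{ij}^{*}=0$, each of the three inner triples $T(1_{\mathcal{A}},a_{ij}+b_{ij},p_i)$, $T(1_{\mathcal{A}},a_{ij},p_i)$, $T(1_{\mathcal{A}},b_{ij},p_i)$ is $0$. Substituting $\Phi (u)=\Phi (a_{ij}+b_{ij})-\Phi (a_{ij})-\Phi (b_{ij})$ into the RHS of (\ref{fundident}) and distributing therefore collapses it to $\Phi (0)-\Phi (0)-\Phi (0)=0$. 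Injectivity then yields
\[
(\alpha_1+\alpha_3+\alpha_5)(u_{ii}+u_{ji})+(\alpha_2+\alpha_4+\alpha_6)(u_{ii}+u_{ji})^{*}=0,
\]
and the adjoint-and-subtract trick controlled by $|\alpha_1+\alpha_3+\alpha_5|\neq|\alpha_2+\alpha_4+\alpha_6|$ forces $u_{ii}=u_{ji}=0$, so $u=u_{ij}+u_{jj}$.

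The main obstacle is eliminating $u_{ij}$ and $u_{jj}$. The symmetric choice $(1_{\mathcal{A}},u,p_j)$ no longer produces an automatic cancellation: writing $\lambda:=\alpha_1+\alpha_3+\alpha_5$, $\mu:=\alpha_2+\alpha_4+\alpha_6$, one has $T(1_{\mathcal{A}},a_{ij}+b_{ij},p_j)=\lambda(a_{ij}+b_{ij})+\mu(a_{ij}+b_{ij})^{*}$, which is nonzero and lies in $\mathcal{A}_{ij}\oplus\mathcal{A}_{ji}$. My plan is to use Claim~\ref{s23} (with the $\mathcal{A}_{ii}$- and $\mathcal{A}_{jj}$-pieces taken to be $0$) to split every $\Phi $-value that appears across $\mathcal{A}_{ij}\oplus\mathcal{A}_{ji}$, and then pair this with a second test triple---for instance $(p_j,u,1_{\mathcal{A}})$---whose computation produces a different scalar combination of $u_{ij}$ and $u_{ij}^{*}$ while sharing the same $\mathcal{A}_{jj}$-piece $\lambda u_{jj}+\mu u_{jj}^{*}$. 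Subtracting the two resulting identities should cancel the $\mathcal{A}_{jj}$-contribution and leave a linear relation in $u_{ij},u_{ij}^{*}$ alone; a final involution-plus-subtraction step governed by $|\lambda|\neq|\mu|$ then forces $u_{ij}=0$, and feeding this back into either of the original identities yields $u_{jj}=0$.

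The hard part is precisely this second step: because $a_{ij}+b_{ij}$ already lives in a single Peirce block, the "diagonal-versus-off-diagonal" split that gave the instant cancellation in Claims~\ref{s22}--\ref{s23} is no longer available, and one must close the argument by carefully bookkeeping on two related test triples together with the scalar hypothesis $|\alpha_1+\alpha_3+\alpha_5|\neq|\alpha_2+\alpha_4+\alpha_6|$.
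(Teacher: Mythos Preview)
Your first step is fine: the triple $(1_{\mathcal{A}},u,p_i)$ does give $T(1_{\mathcal{A}},x,p_i)=0$ for every $x\in\mathcal{A}_{ij}$, so the right-hand side collapses and you legitimately obtain $u_{ii}=u_{ji}=0$.

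The second step, however, does not close as planned. Write $\lambda=\alpha_1+\alpha_3+\alpha_5$, $\mu=\alpha_2+\alpha_4+\alpha_6$. For $(1_{\mathcal{A}},u,p_j)$ the template computation yields
\[
\Phi\bigl(T(1_{\mathcal{A}},u,p_j)\bigr)=\Phi\bigl(\lambda(a_{ij}+b_{ij})+\mu(a_{ij}+b_{ij})^{*}\bigr)-\Phi(\lambda a_{ij}+\mu a_{ij}^{*})-\Phi(\lambda b_{ij}+\mu b_{ij}^{*}),
\]
and for $(p_j,u,1_{\mathcal{A}})$ the analogous expression carries the coefficients $(\alpha_3+\alpha_5)$ and $(\alpha_2+\alpha_4)$ in place of $\lambda$ and $\mu$. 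After splitting via Claim~\ref{s23} you can indeed cancel the term $\Phi(\lambda u_{jj}+\mu u_{jj}^{*})$ on the \emph{left}, but the \emph{right}-hand sides do not cancel: they are differences of $\Phi$-values of distinct scalar multiples of $a_{ij},b_{ij},a_{ij}+b_{ij}$, and at this stage you have neither additivity nor homogeneity of $\Phi$ to simplify them. What survives is an identity in $\mathcal{B}$ relating several unknown $\Phi$-values; it is not of the form $\Phi(\text{something})=0$, so you cannot pull it back through $\Phi^{-1}$ to a relation in $\mathcal{A}$, and the ``adjoint-and-subtract'' trick on $u_{ij}$ never gets off the ground.

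The paper avoids this obstruction entirely by abandoning the ``define $u$ and test'' template for this claim. Instead it exploits the algebraic identity
\[
T\bigl(1_{\mathcal{A}},\,p_i+a_{ij},\,p_j+b_{ij}\bigr)=\lambda(a_{ij}+b_{ij})+\mu(a_{ij}^{*}+b_{ij}a_{ij}^{*}),
\]
applies Claim~\ref{s22} to factor $\Phi(p_i+a_{ij})=\Phi(p_i)+\Phi(a_{ij})$ and $\Phi(p_j+b_{ij})=\Phi(p_j)+\Phi(b_{ij})$ on the right, expands, and regroups into four $T$-triples. Using Claim~\ref{s23} to separate the Peirce components of the left side gives directly
\[
\Phi\bigl(\lambda(a_{ij}+b_{ij})\bigr)=\Phi(\lambda a_{ij})+\Phi(\lambda b_{ij}).
\]
A parallel computation with $T(1_{\mathcal{A}},p_j+b_{ij}^{*},p_i+a_{ij})$ gives the same identity with $\mu$ in place of $\lambda$; since $|\lambda|\neq|\mu|$ forces one of the two scalars to be nonzero, a rescaling finishes the proof. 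The key idea you are missing is this product-of-$(p+a_{ij})$ factorization, which manufactures $a_{ij}+b_{ij}$ as a \emph{single} $T$-value rather than trying to detect it through the vanishing of test triples.
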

\begin{proof} First, note that the following identity holds
{\allowdisplaybreaks\begin{align*}
&(\alpha _{1}+\alpha _{3}+\alpha _{5})(a_{ij}+b_{ij})+(\alpha _{2}+\alpha _{4}+\alpha _{6})(a_{ij}^{*}+b_{ij}a_{ij}^{*})\\
&=\alpha _{1} 1_{\mathcal{A}}(p_{i}+a_{ij})(p_{j}+b_{ij})+\alpha _{2} 1_{\mathcal{A}}^{*}(p_{j}+b_{ij})(p_{i}+a_{ij})^{*}\\
&+\alpha _{3} (p_{i}+a_{ij})1_{\mathcal{A}}^{*}(p_{j}+b_{ij})+\alpha _{4} (p_{j}+b_{ij})1_{\mathcal{A}}(p_{i}+a_{ij})^{*}\\
&+\alpha _{5} (p_{j}+a_{ij})(p_{i}+b_{ij})1_{\mathcal{A}}+\alpha _{6} (p_{j}+b_{ij})(p_{i}+a_{ij})^{*}1_{\mathcal{A}}^{*},
\end{align*}}
for all elements $a_{ij},b_{ij}\in \mathcal{A}_{ij}.$ Hence, by Claim \ref{s23} we have
{\allowdisplaybreaks\begin{align*}
&\Phi ((\alpha _{1}+\alpha _{3}+\alpha _{5}) (a_{ij}+b_{ij}))+\Phi ((\alpha _{2}+\alpha _{4}+\alpha _{6})(a_{ij}^{*}+b_{ij}a_{ij}^{*}))\\
&=\Phi ((\alpha _{1}+\alpha _{3}+\alpha _{5}) (a_{ij}+b_{ij})+(\alpha _{2}+\alpha _{4}+\alpha _{6})(a_{ij}^{*}+b_{ij}a_{ij}^{*}))\\
&=\Phi (\alpha _{1} 1_{\mathcal{A}}(p_{i}+a_{ij})(p_{j}+b_{ij})+\alpha _{2} 1_{\mathcal{A}}^{*}(p_{j}+b_{ij})(p_{i}+a_{ij})^{*}\\
&+\alpha _{3} (p_{i}+a_{ij})1_{\mathcal{A}}^{*}(p_{j}+b_{ij})+\alpha _{4} (p_{j}+b_{ij})1_{\mathcal{A}}(p_{i}+a_{ij})^{*}\\
&+\alpha _{5} (p_{j}+a_{ij})(p_{i}+b_{ij})1_{\mathcal{A}}+\alpha _{6} (p_{j}+b_{ij})(p_{i}+a_{ij})^{*}1_{\mathcal{A}}^{*})\\
&=\alpha _{1} \Phi (1_{\mathcal{A}})\Phi (p_{i}+a_{ij})\Phi (p_{j}+b_{ij})+\alpha _{2} \Phi (1_{\mathcal{A}})^{*}\Phi (p_{j}+b_{ij})\Phi (p_{i}+a_{ij})^{*}\\
&+\alpha _{3} \Phi (p_{i}+a_{ij})\Phi (1_{\mathcal{A}})^{*}\Phi (p_{j}+b_{ij})+\alpha _{4} \Phi (p_{j}+b_{ij})\Phi (1_{\mathcal{A}})\Phi (p_{i}+a_{ij})^{*}\\
&+\alpha _{5} \Phi (p_{i}+a_{ij})\Phi (p_{j}+b_{ij})\Phi (1_{\mathcal{A}})+\alpha _{6} \Phi (p_{j}+b_{ij})\Phi (p_{i}+a_{ij})^{*}\Phi (1_{\mathcal{A}})^{*}\\
&=\alpha _{1} \Phi (1_{\mathcal{A}})(\Phi (p_{i})+\Phi (a_{ij}))(\Phi (p_{j})+\Phi (b_{ij}))\\
&+\alpha _{2} \Phi (1_{\mathcal{A}})^{*}(\Phi (p_{j})+\Phi (b_{ij}))(\Phi (p_{i})^{*}+\Phi (a_{ij})^{*})\\
&+\alpha _{3} (\Phi (p_{i})+\Phi (a_{ij}))\Phi (1_{\mathcal{A}})^{*}(\Phi (p_{j})+\Phi (b_{ij}))\\
&+\alpha _{4} (\Phi (p_{j})+\Phi (b_{ij}))\Phi (1_{\mathcal{A}})(\Phi (p_{i})^{*}+\Phi (a_{ij})^{*})\\
&+\alpha _{5} (\Phi (p_{i})+\Phi (a_{ij}))(\Phi (p_{j})+\Phi (b_{ij}))\Phi (1_{\mathcal{A}})\\
&+\alpha _{6} (\Phi (p_{j})+\Phi (b_{ij}))(\Phi (p_{i})^{*}+\Phi (a_{ij})^{*})\Phi (1_{\mathcal{A}})^{*}\\
&=\alpha _{1} \Phi (1_{\mathcal{A}})\Phi (p_{i})\Phi (p_{j})+\alpha _{2} \Phi (1_{\mathcal{A}})^{*}\Phi (p_{j})\Phi (p_{i})^{*}+\alpha _{3} \Phi (p_{i})\Phi (1_{\mathcal{A}})^{*}\Phi (p_{j})\\
&+\alpha _{4} \Phi (p_{j})\Phi (1_{\mathcal{A}})\Phi (p_{i})^{*}+\alpha _{5} \Phi (p_{i})\Phi (p_{j})\Phi (1_{\mathcal{A}})+\alpha _{6} \Phi (p_{j})\Phi (p_{i})^{*}\Phi (1_{\mathcal{A}})^{*}\\
&+\alpha _{1} \Phi (1_{\mathcal{A}})\Phi (a_{ij})\Phi (p_{j})+\alpha _{2} \Phi (1_{\mathcal{A}})^{*}\Phi (p_{j})\Phi (a_{ij})^{*}+\alpha _{3} \Phi (a_{ij})\Phi (1_{\mathcal{A}})^{*}\Phi (p_{j})\\
&+\alpha _{4} \Phi (p_{j})\Phi (1_{\mathcal{A}})\Phi (a_{ij})^{*}+\alpha _{5} \Phi (a_{ij})\Phi (p_{j})\Phi (1_{\mathcal{A}})+\alpha _{6} \Phi (p_{j})\Phi (a_{ij})^{*}\Phi (1_{\mathcal{A}})^{*}\\
&+\alpha _{1} \Phi (1_{\mathcal{A}})\Phi (p_{i})\Phi (b_{ij})
+\alpha _{2} \Phi (1_{\mathcal{A}})^{*}\Phi (b_{ij})\Phi (p_{i})^{*}
+\alpha _{3} \Phi (p_{i})\Phi (1_{\mathcal{A}})^{*}\Phi (b_{ij})\\
&+\alpha _{4} \Phi (b_{ij})\Phi (1_{\mathcal{A}})\Phi (p_{i})^{*}
+\alpha _{5} \Phi (p_{i})\Phi (b_{ij})\Phi (1_{\mathcal{A}})
+\alpha _{6} \Phi (b_{ij})\Phi (p_{i})^{*}\Phi (1_{\mathcal{A}})^{*}\\
&+\alpha _{1} \Phi (1_{\mathcal{A}})\Phi (a_{ij})\Phi (b_{ij})
+\alpha _{2} \Phi (1_{\mathcal{A}})^{*}\Phi (b_{ij})\Phi (a_{ij})^{*}
+\alpha _{3} \Phi (a_{ij})\Phi (1_{\mathcal{A}})^{*}\Phi (b_{ij})\\
&+\alpha _{4} \Phi (b_{ij})\Phi (1_{\mathcal{A}})\Phi (a_{ij})^{*}
+\alpha _{5} \Phi (a_{ij})\Phi (b_{ij})\Phi (1_{\mathcal{A}})
+\alpha _{6} \Phi (b_{ij})\Phi (a_{ij})^{*}\Phi (1_{\mathcal{A}})^{*}\\
&=\Phi (\alpha _{1} 1_{\mathcal{A}}p_{i}p_{j}+\alpha _{2} 1_{\mathcal{A}}^{*}p_{j}p_{i}^{*}+\alpha _{3} p_{i}1_{\mathcal{A}}^{*}p_{j}+\alpha _{4} p_{j}1_{\mathcal{A}}p_{i}^{*}+\alpha _{5} p_{i}p_{j}1_{\mathcal{A}}\\
&+\alpha _{6} p_{j}p_{i}^{*}1_{\mathcal{A}}^{*})+\Phi (\alpha _{1} 1_{\mathcal{A}}a_{ij}p_{j}+\alpha _{2} 1_{\mathcal{A}}^{*}p_{j}a_{ij}^{*}+\alpha _{3} a_{ij}1_{\mathcal{A}}^{*}p_{j}+\alpha _{4} p_{j}1_{\mathcal{A}}a_{ij}^{*}\\
&+\alpha _{5} a_{ij}p_{j}1_{\mathcal{A}}+\alpha _{6} p_{j}a_{ij}^{*}1_{\mathcal{A}}^{*})+\Phi (\alpha _{1} 1_{\mathcal{A}}p_{i}b_{ij}
+\alpha _{2}1_{\mathcal{A}}^{*}b_{ij}p_{i}^{*}
+\alpha _{3} p_{i}1_{\mathcal{A}}^{*}b_{ij}\\
&+\alpha _{4} b_{ij}1_{\mathcal{A}}p_{i}^{*}+\alpha _{5} p_{i}b_{ij}1_{\mathcal{A}}
+\alpha _{6} b_{ij}p_{i}^{*}1_{\mathcal{A}}^{*})+\Phi (\alpha _{1} 1_{\mathcal{A}}a_{ij}b_{ij}+\alpha _{2} 1_{\mathcal{A}}^{*}b_{ij}a_{ij}^{*}\\
&+\alpha _{3} a_{ij}1_{\mathcal{A}}^{*}b_{ij}+\alpha _{4} b_{ij}1_{\mathcal{A}}a_{ij}^{*}+\alpha _{5} a_{ij}b_{ij}1_{\mathcal{A}}+\alpha _{6} b_{ij}a_{ij}^{*}1_{\mathcal{A}}^{*})\\
&=\Phi ((\alpha _{1}+\alpha _{3}+\alpha _{5}) a_{ij})+\Phi ((\alpha _{2}+\alpha _{4}+\alpha _{6})a_{ij}^{*})+\Phi ((\alpha _{1}+\alpha _{3}+\alpha _{5}) b_{ij})\\
&+\Phi ((\alpha _{2}+\alpha _{4}+\alpha _{6})b_{ij}a_{ij}^{*}).
\end{align*}}
As a consequence we get $\Phi ((\alpha _{1}+\alpha _{3}+\alpha _{5}) (a_{ij}+b_{ij}))=\Phi ((\alpha _{1}+\alpha _{3}+\alpha _{5}) a_{ij})+\Phi ((\alpha _{1}+\alpha _{3}+\alpha _{5}) b_{ij}).$ Next, note that the following identity holds
{\allowdisplaybreaks\begin{align*}
&(\alpha _{2}+\alpha _{4}+\alpha _{6})(a_{ij}+b_{ij})+(\alpha _{1}+\alpha _{3}+\alpha _{5})(b_{ij}^{*}+b_{ij}^{*}a_{ij})\\
&=\alpha _{1} 1_{\mathcal{A}}(p_{j}+b_{ij}^{*})(p_{i}+a_{ij})+\alpha _{2} 1_{\mathcal{A}}^{*}(p_{i}+a_{ij})(p_{j}+b_{ij}^{*})^{*}\\
&+\alpha _{3} (p_{j}+b_{ij}^{*})1_{\mathcal{A}}^{*}(p_{i}+a_{ij})+\alpha _{4} (p_{i}+a_{ij})1_{\mathcal{A}}(p_{j}+b_{ij}^{*})^{*}\\
&+\alpha _{5} (p_{i}+b_{ij}^{*})(p_{j}+a_{ij})1_{\mathcal{A}}+\alpha _{6} (p_{i}+a_{ij})(p_{j}+b_{ij}^{*})^{*}1_{\mathcal{A}}^{*},
\end{align*}}
for all elements $a_{ij},b_{ij}\in \mathcal{A}_{ij}.$ Using a similar process to the first case above, we get $\Phi ((\alpha _{2}+\alpha _{4}+\alpha _{6}) (a_{ij}+b_{ij}))=\Phi ((\alpha _{2}+\alpha _{4}+\alpha _{6}) a_{ij})+\Phi ((\alpha _{2}+\alpha _{4}+\alpha _{6}) b_{ij}).$ The  two last obtained results lead to the conclusion that $\Phi (a_{ij}+b_{ij})=\Phi (a_{ij})+ \Phi (b_{ij}).$
\end{proof}

\begin{claim}\label{s25} For every $a_{ii},b_{ii}\in \mathcal{A}_{ii}$ $(i=1,2),$ we have: $\Phi (a_{ii}+b_{ii})=\Phi (a_{ii})+\Phi (b_{ii}).$ 
\end{claim}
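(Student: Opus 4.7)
The strategy parallels Claims~\ref{s22}--\ref{s24} but crucially brings the primeness of $\mathcal{A}$ into play for the first time. Set $u=u_{11}+u_{12}+u_{21}+u_{22}=\Phi^{-1}(\Phi(a_{ii}+b_{ii})-\Phi(a_{ii})-\Phi(b_{ii}))$; the aim is $u=0$.

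For the first step, fix $j\neq i$ and observe that for each $x\in\{a_{ii},b_{ii},a_{ii}+b_{ii}\}$ one has $xp_{j}=p_{j}x^{*}=0$, so the triple $\alpha_{1}1_{\mathcal{A}}xp_{j}+\alpha_{2}1_{\mathcal{A}}^{*}p_{j}x^{*}+\alpha_{3}x1_{\mathcal{A}}^{*}p_{j}+\alpha_{4}p_{j}1_{\mathcal{A}}x^{*}+\alpha_{5}xp_{j}1_{\mathcal{A}}+\alpha_{6}p_{j}x^{*}1_{\mathcal{A}}^{*}$ vanishes in $\mathcal{A}$. Exactly as in the proof of Claim~\ref{s22}, pushing this through \eqref{fundident} and the definition of $u$ (together with Claim~\ref{s21}) forces the corresponding triple with $u$ in place of $x$ to vanish, and injectivity of $\Phi$ gives the Peirce identity $(\alpha_{1}+\alpha_{3}+\alpha_{5})(u_{ij}+u_{jj})+(\alpha_{2}+\alpha_{4}+\alpha_{6})(u_{ij}^{*}+u_{jj}^{*})=0$. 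The $\ast$-involution manipulation, together with the hypothesis $|\alpha_{1}+\alpha_{3}+\alpha_{5}|\neq|\alpha_{2}+\alpha_{4}+\alpha_{6}|$, then yields $u_{ij}=u_{jj}=0$.

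For $u_{ii}$ and $u_{ji}$, the $p_{i}$-sandwich of Claims~\ref{s22}--\ref{s23} is unavailable because the corresponding triple does not vanish on $\mathcal{A}_{ii}$-inputs, and $\Phi$ is not yet known to be additive on $\mathcal{A}_{ii}$. The substitute is to replace $p_{j}$ by an arbitrary $c_{ij}\in\mathcal{A}_{ij}$: using $\mathcal{A}_{ii}\mathcal{A}_{ji}=0=\mathcal{A}_{ij}\mathcal{A}_{ii}$, the triple collapses to $(\alpha_{1}+\alpha_{3}+\alpha_{5})xc_{ij}\in\mathcal{A}_{ij}$ for every $x\in\mathcal{A}_{ii}$. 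Since the three outputs ($x=a_{ii},b_{ii},a_{ii}+b_{ii}$) all lie in $\mathcal{A}_{ij}$, the additivity of $\Phi$ on $\mathcal{A}_{ij}$ supplied by Claim~\ref{s24}, combined with \eqref{fundident} and linearity of the triple in its middle slot in $\mathcal{B}$, forces the corresponding $u$-triple to vanish under $\Phi$, and by injectivity to be zero in $\mathcal{A}$. Using $u=u_{ii}+u_{ji}$ from the first step, this reads $(\alpha_{1}+\alpha_{3}+\alpha_{5})(u_{ii}c_{ij}+u_{ji}c_{ij})=0$ (the $c_{ij}u^{*}$ terms disappear), and Peirce directness separates the $\mathcal{A}_{ij}$- and $\mathcal{A}_{jj}$-components. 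The symmetric argument with $c_{ji}\in\mathcal{A}_{ji}$ in place of $c_{ij}$ produces the dual relations $(\alpha_{2}+\alpha_{4}+\alpha_{6})c_{ji}u_{ii}^{*}=0=(\alpha_{2}+\alpha_{4}+\alpha_{6})c_{ji}u_{ji}^{*}$ for every $c_{ji}\in\mathcal{A}_{ji}$.

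The hypothesis ensures that at least one of $(\alpha_{1}+\alpha_{3}+\alpha_{5})$ and $(\alpha_{2}+\alpha_{4}+\alpha_{6})$ is nonzero. Since $p_{j}\neq 0$, primeness of $\mathcal{A}$ converts either $u_{ii}\mathcal{A}p_{j}=0=u_{ji}\mathcal{A}p_{j}$ or $p_{j}\mathcal{A}u_{ii}^{*}=0=p_{j}\mathcal{A}u_{ji}^{*}$ into $u_{ii}=u_{ji}=0$, which, combined with the first step, gives $u=0$ and proves the claim. The main obstacle is precisely this second half: the $p_{i}$-sandwich trick no longer produces a zero-triple on $\mathcal{A}_{ii}$-inputs, forcing additivity to be imported from $\mathcal{A}_{ij}$ via Claim~\ref{s24}; this only yields annihilator conditions, so the primeness of $\mathcal{A}$ is invoked here for the first time in the proof.
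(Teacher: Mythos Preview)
Your argument is correct and follows essentially the same strategy as the paper: the first step (vanishing of $u_{ij}$ and $u_{jj}$ via the $p_{j}$-sandwich) is identical, and the second step uses off-diagonal test elements together with Claim~\ref{s24} and primeness to kill $u_{ii}$ and $u_{ji}$. The only difference is a harmless transposition of slots: the paper places the test element $t_{ij}$ (resp.\ $t_{ji}$) in the $b$-slot and $u$ in the $c$-slot, obtaining $(\alpha_{1}+\alpha_{3}+\alpha_{5})t_{ij}u_{ji}=0$ and then, from the $t_{ji}$-triple, the three relations $(\alpha_{2}+\alpha_{4}+\alpha_{6})u_{ji}=0$, $(\alpha_{1}+\alpha_{3}+\alpha_{5})u_{ii}=0$, $(\alpha_{2}+\alpha_{4}+\alpha_{6})u_{ii}=0$; you instead keep $u$ in the $b$-slot and vary $c_{ij},c_{ji}$ in the $c$-slot. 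Both routes yield the same annihilator conditions after primeness. Your closing observation that the hypothesis forces at least one of the two scalar sums to be nonzero is a slightly cleaner endgame than the paper's $|\,\cdot\,|^{2}-|\,\cdot\,|^{2}$ combination, but the content is the same. One terminological nit: the $b$-slot is only \emph{additive} (real-linear), not $\mathbb{C}$-linear, because of the $b^{*}$ terms; since you only use additivity, nothing is affected.
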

\begin{proof} Let $u=u_{ii}+u_{ij}+u_{ji}+u_{jj}=\Phi ^{-1}(\Phi (a_{ii}+b_{ii})-\Phi (a_{ii})-\Phi (b_{ii}))\in \mathcal{A}$ $(i\neq j; i,j=1,2).$ Then {\allowdisplaybreaks\begin{align*}
&\Phi (\alpha _{1} 1_{\mathcal{A}}up_{j}+\alpha _{2} 1_{\mathcal{A}}^{*}p_{j}u^{*}+\alpha _{3} u1_{\mathcal{A}}^{*}p_{j}+\alpha _{4} p_{j}1_{\mathcal{A}}u^{*}+\alpha _{5} up_{j}1_{\mathcal{A}}+\alpha _{6} p_{j}u^{*}1_{\mathcal{A}}^{*})\\
&=\alpha _{1} \Phi (1_{\mathcal{A}})\Phi (u)\Phi (p_{j})+\alpha _{2} \Phi (1_{\mathcal{A}})^{*}\Phi (p_{j})\Phi (u)^{*}+\alpha _{3} \Phi (u)\Phi (1_{\mathcal{A}})^{*}\Phi (p_{j})\\
&+\alpha _{4} \Phi (p_{j})\Phi (1_{\mathcal{A}})\Phi (u)^{*}+\alpha _{5} \Phi (u)\Phi (p_{j})\Phi (1_{\mathcal{A}})+\alpha _{6} \Phi (p_{j})\Phi (u)^{*}\Phi (1_{\mathcal{A}})^{*}\\
&=\alpha _{1} \Phi (1_{\mathcal{A}})\Phi (a_{ii}+b_{ii})\Phi (p_{j})+\alpha _{2} \Phi (1_{\mathcal{A}})^{*}\Phi (p_{j})\Phi (a_{ii}+b_{ii})^{*}\\
&+\alpha _{3} \Phi (a_{ii}+b_{ii})\Phi (1_{\mathcal{A}})^{*}\Phi (p_{j})+\alpha _{4} \Phi (p_{j})\Phi (1_{\mathcal{A}})\Phi (a_{ii}+b_{ii})^{*}\\
&+\alpha _{5} \Phi (a_{ii}+b_{ii})\Phi (p_{j})\Phi (1_{\mathcal{A}})+\alpha _{6} \Phi (p_{j})\Phi (a_{ii}+b_{ii})^{*}\Phi (1_{\mathcal{A}})^{*}\\
&-\alpha _{1} \Phi (1_{\mathcal{A}})\Phi (a_{ii})\Phi (p_{j})-\alpha _{2} \Phi (1_{\mathcal{A}})^{*}\Phi (p_{j})\Phi (a_{ii})^{*}\\
&-\alpha _{3} \Phi (a_{ii})\Phi (1_{\mathcal{A}})^{*}\Phi (p_{j})-\alpha _{4} \Phi (p_{j})\Phi (1_{\mathcal{A}})\Phi (a_{ii})^{*}\\
&-\alpha _{5} \Phi (a_{ii})\Phi (p_{j})\Phi (1_{\mathcal{A}})-\alpha _{6} \Phi (p_{j})\Phi (a_{ii})^{*}\Phi (1_{\mathcal{A}})^{*}\\
&-\alpha _{1} \Phi (1_{\mathcal{A}})\Phi (b_{ii})\Phi (p_{j})-\alpha _{2} \Phi (1_{\mathcal{A}})^{*}\Phi (p_{j})\Phi (b_{ii})^{*}\\
&-\alpha _{3} \Phi (b_{ii})\Phi (1_{\mathcal{A}})^{*}\Phi (p_{j})-\alpha _{4} \Phi (p_{j})\Phi (1_{\mathcal{A}})\Phi (b_{ii})^{*}\\
&-\alpha _{5} \Phi (b_{ii})\Phi (p_{j})\Phi (1_{\mathcal{A}})-\alpha _{6} \Phi (p_{j})\Phi (b_{ii})^{*}\Phi (1_{\mathcal{A}})^{*}\\
&=\Phi (\alpha _{1} 1_{\mathcal{A}}(a_{ii}+b_{ii})p_{j}+\alpha _{2} 1_{\mathcal{A}}^{*}p_{j}(a_{ii}+b_{ii})^{*}+\alpha _{3} (a_{ii}+b_{ii})1_{\mathcal{A}}^{*}p_{j}\\
&+\alpha _{4} p_{j}1_{\mathcal{A}}(a_{ii}+b_{ii})^{*}+\alpha _{5} (a_{ii}+b_{ii})p_{j}1_{\mathcal{A}}+\alpha _{6} p_{j}(a_{ii}+b_{ii})^{*}1_{\mathcal{A}}^{*})\\
&-\Phi (\alpha _{1} 1_{\mathcal{A}}a_{ii}p_{j}+\alpha _{2} 1_{\mathcal{A}}^{*}p_{j}a_{ii}^{*}+\alpha _{3} a_{ii}1_{\mathcal{A}}^{*}p_{j}+\alpha _{4} p_{j}1_{\mathcal{A}}a_{ii}^{*}+\alpha _{5} a_{ii}p_{j}1_{\mathcal{A}}\\
&+\alpha _{6} p_{j}a_{ii}^{*}1_{\mathcal{A}}^{*})-\Phi (\alpha _{1} 1_{\mathcal{A}}b_{ii}p_{j}+\alpha _{2} 1_{\mathcal{A}}^{*}p_{j}b_{ii}^{*}+\alpha _{3} b_{ii}1_{\mathcal{A}}^{*}p_{j}+\alpha _{4} p_{j}1_{\mathcal{A}}b_{ii}^{*}\\
&+\alpha _{5} b_{ii}p_{j}1_{\mathcal{A}}+\alpha _{6} p_{j}b_{ii}^{*}1_{\mathcal{A}}^{*})=0
\end{align*}}
which leads directly to the identity $\alpha _{1} 1_{\mathcal{A}}up_{j}+\alpha _{2} 1_{\mathcal{A}}^{*}p_{j}u^{*}+\alpha _{3} u1_{\mathcal{A}}^{*}p_{j}+\alpha _{4} p_{j}1_{\mathcal{A}}u^{*}+\alpha _{5} up_{j}1_{\mathcal{A}}+\alpha _{6} p_{j}u^{*}1_{\mathcal{A}}^{*}=0.$ It therefore follows that $(\alpha _{1}+\alpha _{3}+\alpha _{5})u_{ij}+(\alpha _{2}+\alpha _{4}+\alpha _{6})u_{ij}^{*}+(\alpha _{1}+\alpha _{3}+\alpha _{5})u_{jj}+(\alpha _{2}+\alpha _{4}+\alpha _{6})u_{jj}^{*}=0 \inlineeqnum\label{eqn:idt10}$ and hence the identity $(\overline{\alpha _{2}+\alpha _{4}+\alpha _{6}})u_{ij}+(\overline{\alpha _{1}+\alpha _{3}+\alpha _{5}})u_{ij}^{*}+(\overline{\alpha _{2}+\alpha _{4}+\alpha _{6}})u_{jj}+(\overline{\alpha _{1}+\alpha _{3}+\alpha _{5}})u_{jj}^{*}=0 \inlineeqnum\label{eqn:idt11}.$ From (\ref{eqn:idt10}) and (\ref{eqn:idt11}), we get $(|\alpha _{1}+\alpha _{3}+\alpha _{5}|^{2}-|\alpha _{2}+\alpha _{4}+\alpha _{6}|^ {2})(u_{ij}+u_{jj})=0$ which implies that $u_{ij}+u_{jj}=0.$ This results that $u_{ij}=0$ and $u_{jj}=0.$ Next, for all element $t_{ij}\in \mathcal{A}_{ij}$ we have
{\allowdisplaybreaks\begin{align*}
&\Phi (\alpha _{1} 1_{\mathcal{A}}t_{ij}u+\alpha _{2} 1_{\mathcal{A}}^{*}ut_{ij}^{*}+\alpha _{3} t_{ij}1_{\mathcal{A}}^{*}u+\alpha _{4} u1_{\mathcal{A}}t_{ij}^{*}+\alpha _{5} t_{ij}u1_{\mathcal{A}}+\alpha _{6} ut_{ij}^{*}1_{\mathcal{A}}^{*})\\
&=\alpha _{1} \Phi (1_{\mathcal{A}})\Phi (t_{ij})\Phi (u)+\alpha _{2} \Phi (1_{\mathcal{A}})^{*}\Phi (u)\Phi (t_{ij})^{*}+\alpha _{3} \Phi (t_{ij})\Phi (1_{\mathcal{A}})^{*}\Phi (u)\\
&+\alpha _{4} \Phi (u)\Phi (1_{\mathcal{A}})\Phi (t_{ij})^{*}+\alpha _{5} \Phi (t_{ij})\Phi (u)\Phi (1_{\mathcal{A}})+\alpha _{6} \Phi (u)\Phi (t_{ij})^{*}\Phi (1_{\mathcal{A}})^{*}\\
&=\alpha _{1} \Phi (1_{\mathcal{A}})\Phi (t_{ij})\Phi (a_{ii}+b_{ii})+\alpha _{2} \Phi (1_{\mathcal{A}})^{*}\Phi (a_{ii}+b_{ii})\Phi (t_{ij})^{*}\\
&+\alpha _{3} \Phi (t_{ij})\Phi (1_{\mathcal{A}})^{*}\Phi (a_{ii}+b_{ii})+\alpha _{4} \Phi (a_{ii}+b_{ii})\Phi (1_{\mathcal{A}})\Phi (t_{ij})^{*}\\
&+\alpha _{5} \Phi (t_{ij})\Phi (a_{ii}+b_{ii})\Phi (1_{\mathcal{A}})+\alpha _{6} \Phi (a_{ii}+b_{ii})\Phi (t_{ij})^{*}\Phi (1_{\mathcal{A}})^{*}\\
&-\alpha _{1} \Phi (1_{\mathcal{A}})\Phi (t_{ij})\Phi (a_{ii})-\alpha _{2} \Phi (1_{\mathcal{A}})^{*}\Phi (a_{ii})\Phi (t_{ij})^{*}-\alpha _{3} \Phi (t_{ij})\Phi (1_{\mathcal{A}})^{*}\Phi (a_{ii})\\
&-\alpha _{4} \Phi (a_{ii})\Phi (1_{\mathcal{A}})\Phi (t_{ij})^{*}-\alpha _{5} \Phi (t_{ij})\Phi (a_{ii})\Phi (1_{\mathcal{A}})-\alpha _{6} \Phi (a_{ii})\Phi (t_{ij})^{*}\Phi (1_{\mathcal{A}})^{*}\\
&-\alpha _{1} \Phi (1_{\mathcal{A}})\Phi (t_{ij})\Phi (b_{ii})-\alpha _{2} \Phi (1_{\mathcal{A}})^{*}\Phi (b_{ii})\Phi (t_{ij})^{*}-\alpha _{3} \Phi (t_{ij})\Phi (1_{\mathcal{A}})^{*}\Phi (b_{ii})\\
&-\alpha _{4} \Phi (b_{ii})\Phi (1_{\mathcal{A}})\Phi (t_{ij})^{*}-\alpha _{5} \Phi (t_{ij})\Phi (b_{ii})\Phi (1_{\mathcal{A}})-\alpha _{6} \Phi (b_{ii})\Phi (t_{ij})^{*}\Phi (1_{\mathcal{A}})^{*}\\
&=\Phi (\alpha _{1} 1_{\mathcal{A}}t_{ij}(a_{ii}+b_{ii})+\alpha _{2} 1_{\mathcal{A}}^{*}(a_{ii}+b_{ii})t_{ij}^{*}+\alpha _{3} t_{ij}1_{\mathcal{A}}^{*}(a_{ii}+b_{ii})\\
&+\alpha _{4} (a_{ii}+b_{ii})1_{\mathcal{A}}t_{ij}^{*}+\alpha _{5} t_{ij}(a_{ii}+b_{ii})1_{\mathcal{A}}+\alpha _{6} (a_{ii}+b_{ii})t_{ij}^{*}1_{\mathcal{A}}^{*})\\
&-\Phi (\alpha _{1} 1_{\mathcal{A}}t_{ij}a_{ii}+\alpha _{2} 1_{\mathcal{A}}^{*}a_{ii}t_{ij}^{*}+\alpha _{3} t_{ij}1_{\mathcal{A}}^{*}a_{ii} +\alpha _{4} a_{ii}1_{\mathcal{A}}t_{ij}^{*}+\alpha _{5} t_{ij}a_{ii}1_{\mathcal{A}}\\
&+\alpha _{6} a_{ii}t_{ij}^{*}1_{\mathcal{A}}^{*})-\Phi (\alpha _{1} 1_{\mathcal{A}}t_{ij}b_{ii}+\alpha _{2} 1_{\mathcal{A}}^{*}b_{ii}t_{ij}^{*}+\alpha _{3} t_{ij}1_{\mathcal{A}}^{*}b_{ii}+\alpha _{4} b_{ii}1_{\mathcal{A}}t_{ij}^{*}\\
&+\alpha _{5} t_{ij}b_{ii}1_{\mathcal{A}}+\alpha _{6} b_{ii}t_{ij}^{*}1_{\mathcal{A}}^{*})=0.
\end{align*}}
It follows immediately from this that $\alpha _{1} 1_{\mathcal{A}}t_{ij}u+\alpha _{2} 1_{\mathcal{A}}^{*}ut_{ij}^{*}+\alpha _{3} t_{ij}1_{\mathcal{A}}^{*}u+\alpha _{4} u1_{\mathcal{A}}t_{ij}^{*}+\alpha _{5} t_{ij}u1_{\mathcal{A}}+\alpha _{6} ut_{ij}^{*}1_{\mathcal{A}}^{*}=0.$ From this identity, we can deduce that $(\alpha _{1} +\alpha _{3} +\alpha _{5})t_{ij}u_{ji}=0$ which leads to $(\alpha _{1} +\alpha _{3} +\alpha _{5})u_{ji}=0 \inlineeqnum\label{eqn:idt12},$ because of the primeness of $\mathcal{A}.$ Also, for all element $t_{ji}\in \mathcal{A}_{ji}$ we have
{\allowdisplaybreaks\begin{align*}
&\Phi (\alpha _{1} 1_{\mathcal{A}}t_{ji}u+\alpha _{2} 1_{\mathcal{A}}^{*}ut_{ji}^{*}+\alpha _{3} t_{ji}1_{\mathcal{A}}^{*}u+\alpha _{4} u1_{\mathcal{A}}t_{ji}^{*}+\alpha _{5} t_{ji}u1_{\mathcal{A}}+\alpha _{6} ut_{ji}^{*}1_{\mathcal{A}}^{*})\\
&=\alpha _{1} \Phi (1_{\mathcal{A}})\Phi (t_{ji})\Phi (u)+\alpha _{2} \Phi (1_{\mathcal{A}})^{*}\Phi (u)\Phi (t_{ji})^{*}+\alpha _{3} \Phi (t_{ji})\Phi (1_{\mathcal{A}})^{*}\Phi (u)\\
&+\alpha _{4} \Phi (u)\Phi (1_{\mathcal{A}})\Phi (t_{ji})^{*}+\alpha _{5} \Phi (t_{ji})\Phi (u)\Phi (1_{\mathcal{A}})+\alpha _{6} \Phi (u)\Phi (t_{ji})^{*}\Phi (1_{\mathcal{A}})^{*}\\
&=\alpha _{1} \Phi (1_{\mathcal{A}})\Phi (t_{ji})\Phi (a_{ii}+b_{ii})+\alpha _{2} \Phi (1_{\mathcal{A}})^{*}\Phi (a_{ii}+b_{ii})\Phi (t_{ji})^{*}\\
&+\alpha _{3} \Phi (t_{ji})\Phi (1_{\mathcal{A}})^{*}\Phi (a_{ii}+b_{ii})+\alpha _{4} \Phi (a_{ii}+b_{ii})\Phi (1_{\mathcal{A}})\Phi (t_{ji})^{*}\\
&+\alpha _{5} \Phi (t_{ji})\Phi (a_{ii}+b_{ii})\Phi (1_{\mathcal{A}})+\alpha _{6} \Phi (a_{ii}+b_{ii})\Phi (t_{ji})^{*}\Phi (1_{\mathcal{A}})^{*}\\
&-\alpha _{1} \Phi (1_{\mathcal{A}})\Phi (t_{ji})\Phi (a_{ii})-\alpha _{2} \Phi (1_{\mathcal{A}})^{*}\Phi (a_{ii})\Phi (t_{ji})^{*}-\alpha _{3} \Phi (t_{ji})\Phi (1_{\mathcal{A}})^{*}\Phi (a_{ii})\\
&-\alpha _{4} \Phi (a_{ii})\Phi (1_{\mathcal{A}})\Phi (t_{ji})^{*}-\alpha _{5} \Phi (t_{ji})\Phi (a_{ii})\Phi (1_{\mathcal{A}})-\alpha _{6} \Phi (a_{ii})\Phi (t_{ji})^{*}\Phi (1_{\mathcal{A}})^{*}\\
&-\alpha _{1} \Phi (1_{\mathcal{A}})\Phi (t_{ji})\Phi (b_{ii})-\alpha _{2} \Phi (1_{\mathcal{A}})^{*}\Phi (b_{ii})\Phi (t_{ji})^{*}-\alpha _{3} \Phi (t_{ji})\Phi (1_{\mathcal{A}})^{*}\Phi (b_{ii})\\
&-\alpha _{4} \Phi (b_{ii})\Phi (1_{\mathcal{A}})\Phi (t_{ji})^{*}-\alpha _{5} \Phi (t_{ji})\Phi (b_{ii})\Phi (1_{\mathcal{A}})-\alpha _{6} \Phi (b_{ii})\Phi (t_{ji})^{*}\Phi (1_{\mathcal{A}})^{*}\\
&=\Phi (\alpha _{1} 1_{\mathcal{A}}t_{ji}(a_{ii}+b_{ii})+\alpha _{2} 1_{\mathcal{A}}^{*}(a_{ii}+b_{ii})t_{ji}^{*}+\alpha _{3} t_{ji}1_{\mathcal{A}}^{*}(a_{ii}+b_{ii})\\
&+\alpha _{4} (a_{ii}+b_{ii})1_{\mathcal{A}}t_{ji}^{*}+\alpha _{5} t_{ji}(a_{ii}+b_{ii})1_{\mathcal{A}}+\alpha _{6} (a_{ii}+b_{ii})t_{ji}^{*}1_{\mathcal{A}}^{*})\\
&-\Phi (\alpha _{1} 1_{\mathcal{A}}t_{ji}a_{ii}+\alpha _{2} 1_{\mathcal{A}}^{*}a_{ii}t_{ji}^{*}+\alpha _{3} t_{ji}1_{\mathcal{A}}^{*}a_{ii} +\alpha _{4} a_{ii}1_{\mathcal{A}}t_{ji}^{*}+\alpha _{5} t_{ji}a_{ii}1_{\mathcal{A}}\\
&+\alpha _{6} a_{ii}t_{ji}^{*}1_{\mathcal{A}}^{*})-\Phi (\alpha _{1} 1_{\mathcal{A}}t_{ji}b_{ii}+\alpha _{2} 1_{\mathcal{A}}^{*}b_{ii}t_{ji}^{*}+\alpha _{3} t_{ji}1_{\mathcal{A}}^{*}b_{ii}+\alpha _{4} b_{ii}1_{\mathcal{A}}t_{ji}^{*}\\
&+\alpha _{5} t_{ji}b_{ii}1_{\mathcal{A}}+\alpha _{6} b_{ii}t_{ji}^{*}1_{\mathcal{A}}^{*})=0
\end{align*}} which results in the identity $\alpha _{1} 1_{\mathcal{A}}t_{ji}u+\alpha _{2} 1_{\mathcal{A}}^{*}ut_{ji}^{*}+\alpha _{3} t_{ji}1_{\mathcal{A}}^{*}u+\alpha _{4} u1_{\mathcal{A}}t_{ji}^{*}+\alpha _{5} t_{ji}u1_{\mathcal{A}}+\alpha _{6} ut_{ji}^{*}1_{\mathcal{A}}^{*}=0.$ This shows that $(\alpha _{1} +\alpha _{3} +\alpha _{5})t_{ji}u_{ii}+(\alpha _{2} +\alpha _{4} +\alpha _{6})u_{ii}t_{ji}^{*}+(\alpha _{2}+\alpha _{4}+\alpha _{6})u_{ji}t_{ji}^{*}=0$ which allows to obtain the identities $(\alpha _{2}+\alpha _{4}+\alpha _{6})u_{ji}t_{ji}^{*}=0,$ $(\alpha _{1} +\alpha _{3} +\alpha _{5})t_{ji}u_{ii}=0$ and $(\alpha _{2} +\alpha _{4} +\alpha _{6})u_{ii}t_{ji}^{*}=0.$ As a consequence we get the identities $(\alpha _{2}+\alpha _{4}+\alpha _{6})u_{ji}=0 \inlineeqnum\label{eqn:idt13},$ $(\alpha _{1} +\alpha _{3} +\alpha _{5})u_{ii}=0 \inlineeqnum\label{eqn:idt14}$ and $(\alpha _{2} +\alpha _{4} +\alpha _{6})u_{ii}=0 \inlineeqnum\label{eqn:idt15},$ respectively. It follows from (\ref{eqn:idt12}) and (\ref{eqn:idt13}) that $(|\alpha _{1}+\alpha _{3}+\alpha _{5}|^{2}-|\alpha _{2}+\alpha _{4}+\alpha _{6}|^ {2})u_{ji}=0$ and from (\ref{eqn:idt14}) and (\ref{eqn:idt15}) that $(|\alpha _{1}+\alpha _{3}+\alpha _{5}|^{2}-|\alpha _{2}+\alpha _{4}+\alpha _{6}|^ {2})u_{ii}=0.$ Therefore, we must have $u_{ji}=0$ and $u_{ii}=0.$
\end{proof}

\begin{claim}\label{s26} $\Phi $ is an additive map.
\end{claim}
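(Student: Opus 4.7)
The plan is to use the Peirce decomposition together with the previous four claims, which between them handle every ``type'' of sum that can arise, and then assemble a fully additive statement from these pieces.

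First, given arbitrary $a, b \in \mathcal{A}$, I would decompose each along the Peirce decomposition as $a = a_{11} + a_{12} + a_{21} + a_{22}$ and $b = b_{11} + b_{12} + b_{21} + b_{22}$, where $a_{ij}, b_{ij} \in \mathcal{A}_{ij}$. Then $a + b$ regroups into $(a_{11}+b_{11}) + (a_{12}+b_{12}) + (a_{21}+b_{21}) + (a_{22}+b_{22})$, with each summand lying in the corresponding Peirce component $\mathcal{A}_{ij}$.

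Next, I would apply Claim \ref{s23} (taking $i=1, j=2$) to split $\Phi(a+b)$ across the four Peirce components:
\[
\Phi(a+b) = \Phi(a_{11}+b_{11}) + \Phi(a_{12}+b_{12}) + \Phi(a_{21}+b_{21}) + \Phi(a_{22}+b_{22}).
\]
Then Claim \ref{s25} handles the diagonal terms (for both $i=1$ and $i=2$) and Claim \ref{s24} handles the off-diagonal terms, yielding
\[
\Phi(a+b) = \sum_{i,j=1}^{2}\Phi(a_{ij}) + \sum_{i,j=1}^{2}\Phi(b_{ij}).
\]
Finally, a second application of Claim \ref{s23} to $a$ and to $b$ separately collapses the two sums back into $\Phi(a) + \Phi(b)$, completing the proof.

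There is no serious obstacle here: all the heavy lifting (the injectivity arguments, the $|\alpha_{1}+\alpha_{3}+\alpha_{5}|^{2} - |\alpha_{2}+\alpha_{4}+\alpha_{6}|^{2} \neq 0$ cancellation, and the primeness argument) has already been carried out in Claims \ref{s22}--\ref{s25}. The only thing to be careful about is to invoke Claim \ref{s23} in the right direction (it applies to sums of four elements from the four distinct Peirce components, which is exactly the shape of $a$, of $b$, and of the regrouped $a+b$), so the step is purely formal bookkeeping on the Peirce decomposition.
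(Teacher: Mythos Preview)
Your proof is correct and is exactly the argument the paper has in mind: the paper's own proof simply states that the result ``is a direct consequence of Claims \ref{s23}, \ref{s24} and \ref{s25},'' and what you have written is precisely the unpacking of that sentence via the Peirce decomposition.
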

\begin{proof} The result is a direct consequence of Claims \ref{s23}, \ref{s24} and \ref{s25}.
\end{proof}

In what follows, we prove the second part of the Theorem \ref{thm11}. In the remainder of this paper, all Claims satisfy the conditions (i)-(ii).

\begin{claim}\label{s27} (i) $\Phi (1_{\mathcal{A}})=1_{\mathcal{B}}$ and (ii) $\Phi ((\textstyle \sum _{k=1}^{6} \alpha _{k})c)=(\textstyle \sum _{k=1}^{6} \alpha _{k})\Phi (c),$ for all element $c\in \mathcal{A}.$ 
\end{claim}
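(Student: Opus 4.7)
The plan is to derive both parts of the claim from one well-chosen substitution in the defining identity (\ref{fundident}). Setting $a=b=1_{\mathcal{A}}$ and letting $c\in\mathcal{A}$ be arbitrary, the left-hand side collapses to $\Phi(\delta c)$ where $\delta=\sum_{k=1}^{6}\alpha_{k}$. Writing $q=\Phi(1_{\mathcal{A}})$ and using $q^{*}=q$ together with $q^{2}=q$ (valid because $q$ is a projection by hypothesis (i)), the right-hand side of (\ref{fundident}) simplifies term by term and the identity becomes
\[
\Phi(\delta c)=(\alpha_{1}+\alpha_{3})\,q\Phi(c)+(\alpha_{4}+\alpha_{6})\,\Phi(c)q+(\alpha_{2}+\alpha_{5})\,q\Phi(c)q.
\]

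Next I would multiply this identity by $q'=1_{\mathcal{B}}-q$ on both the left and the right. Since $qq'=q'q=0$, every term on the right-hand side is annihilated, yielding
\[
q'\Phi(\delta c)q'=0 \quad\text{for all } c\in\mathcal{A}.
\]

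For part (i), I would observe that the theorem's hypothesis $|\alpha_{1}+\alpha_{3}+\alpha_{5}|\neq|\alpha_{2}+\alpha_{4}+\alpha_{6}|$ forces $\delta\neq 0$: otherwise $\alpha_{1}+\alpha_{3}+\alpha_{5}=-(\alpha_{2}+\alpha_{4}+\alpha_{6})$ and their moduli would coincide, a contradiction. Since $\delta\neq 0$, the map $c\mapsto\delta c$ is a bijection of $\mathcal{A}$, and combining this with the surjectivity of $\Phi$ gives $\{\Phi(\delta c):c\in\mathcal{A}\}=\mathcal{B}$. Therefore $q'yq'=0$ for every $y\in\mathcal{B}$; specializing to $y=1_{\mathcal{B}}$ yields $(q')^{2}=q'=0$, hence $q=1_{\mathcal{B}}$.

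Part (ii) is then immediate from the displayed identity: with $q=1_{\mathcal{B}}$, each of $q\Phi(c)$, $\Phi(c)q$, and $q\Phi(c)q$ equals $\Phi(c)$, and the coefficients sum to $(\alpha_{1}+\alpha_{3})+(\alpha_{4}+\alpha_{6})+(\alpha_{2}+\alpha_{5})=\delta$, so $\Phi(\delta c)=\delta\Phi(c)$ for every $c\in\mathcal{A}$. The only subtle point in the whole argument is noticing that the interplay between the conditions $\alpha_{2}+\alpha_{4}+\alpha_{6}\neq 0$ and $|\alpha_{1}+\alpha_{3}+\alpha_{5}|\neq|\alpha_{2}+\alpha_{4}+\alpha_{6}|$ rules out the degenerate case $\delta=0$; once that observation is in hand, the surjectivity of $\Phi$ closes the argument immediately.
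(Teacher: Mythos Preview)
Your argument is correct. Both you and the paper start from the same substitution $a=b=1_{\mathcal{A}}$ in (\ref{fundident}), but you diverge in how you extract $\Phi(1_{\mathcal{A}})=1_{\mathcal{B}}$. The paper first records the special case $c=1_{\mathcal{A}}$ to get $\Phi(\delta\,1_{\mathcal{A}})=\delta\,\Phi(1_{\mathcal{A}})$, then picks the unique $c$ with $\Phi(c)=1_{\mathcal{B}}$, computes $\Phi(\delta c)=\delta\,\Phi(1_{\mathcal{A}})=\Phi(\delta\,1_{\mathcal{A}})$, and uses injectivity of $\Phi$ together with $\delta\neq 0$ to force $c=1_{\mathcal{A}}$. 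You instead keep $c$ arbitrary, sandwich by $q'=1_{\mathcal{B}}-q$ to kill the right-hand side, and then use surjectivity of $\Phi$ (via $\delta\neq 0$) to conclude $q'\mathcal{B}q'=0$, whence $q'=0$. Your route is slightly more direct in that it avoids the auxiliary computation at $c=1_{\mathcal{A}}$; the paper's route stays closer to the pattern of the other claims by exploiting injectivity of $\Phi$. One small remark: in your closing sentence the hypothesis $\alpha_{2}+\alpha_{4}+\alpha_{6}\neq 0$ plays no role in ruling out $\delta=0$; the modulus condition $|\alpha_{1}+\alpha_{3}+\alpha_{5}|\neq|\alpha_{2}+\alpha_{4}+\alpha_{6}|$ alone already does that, exactly as you argued two paragraphs earlier.
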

\begin{proof} First, note that
{\allowdisplaybreaks\begin{align*}
&\Phi ((\textstyle \sum _{k=1}^{6} \alpha _{k})1_{\mathcal{A}})=\Phi (\alpha _{1} 1_{\mathcal{A}}1_{\mathcal{A}}1_{\mathcal{A}}+\alpha _{2} 1_{\mathcal{A}}^{*}1_{\mathcal{A}}1_{\mathcal{A}}^{*}+\alpha _{3} 1_{\mathcal{A}}1_{\mathcal{A}}^{*}1_{\mathcal{A}} +\alpha _{4} 1_{\mathcal{A}}1_{\mathcal{A}}1_{\mathcal{A}}^{*}\\
&+\alpha _{5} 1_{\mathcal{A}}1_{\mathcal{A}}1_{\mathcal{A}}+\alpha _{6} 1_{\mathcal{A}}1_{\mathcal{A}}^{*}1_{\mathcal{A}}^{*})=\alpha _{1} \Phi (1_{\mathcal{A}})\Phi (1_{\mathcal{A}})\Phi (1_{\mathcal{A}})+\alpha _{2} \Phi (1_{\mathcal{A}})^{*}\Phi (1_{\mathcal{A}})\Phi (1_{\mathcal{A}})^{*}\\
&+\alpha _{3} \Phi (1_{\mathcal{A}})\Phi (1_{\mathcal{A}})^{*}\Phi (1_{\mathcal{A}})+\alpha _{4} \Phi (1_{\mathcal{A}})\Phi (1_{\mathcal{A}})\Phi (1_{\mathcal{A}})^{*}+\alpha _{5} \Phi (1_{\mathcal{A}})\Phi (1_{\mathcal{A}})\Phi (1_{\mathcal{A}})\\
&+\alpha _{6}\Phi (1_{\mathcal{A}})\Phi (1_{\mathcal{A}})^{*}\Phi (1_{\mathcal{A}})^{*}=(\textstyle \sum _{k=1}^{6} \alpha _{k})\Phi (1_{\mathcal{A}}).
\end{align*}}
Hence, choose an element $c\in \mathcal{A},$ such that $\phi (c)=1_{\mathcal{B}}.$ Then 
{\allowdisplaybreaks\begin{align*}
&\Phi ((\textstyle \sum _{k=1}^{6} \alpha _{k})c)=\Phi (\alpha _{1} 1_{\mathcal{A}}1_{\mathcal{A}}c+\alpha _{2} 1_{\mathcal{A}}^{*}c1_{\mathcal{A}}^{*}+\alpha _{3} 1_{\mathcal{A}}1_{\mathcal{A}}^{*}c +\alpha _{4} c1_{\mathcal{A}}1_{\mathcal{A}}^{*}\\
&+\alpha _{5} 1_{\mathcal{A}}c1_{\mathcal{A}}+\alpha _{6} c1_{\mathcal{A}}^{*}1_{\mathcal{A}}^{*})=\alpha _{1} \Phi (1_{\mathcal{A}})\Phi (1_{\mathcal{A}})\Phi (c)+\alpha _{2} \Phi (1_{\mathcal{A}})^{*}\Phi (c)\Phi (1_{\mathcal{A}})^{*}\\
&+\alpha _{3} \Phi (1_{\mathcal{A}})\Phi (1_{\mathcal{A}})^{*}\Phi (c)+\alpha _{4} \Phi (c)\Phi (1_{\mathcal{A}})\Phi (1_{\mathcal{A}})^{*}+\alpha _{5} \Phi (1_{\mathcal{A}})\Phi (c)\Phi (1_{\mathcal{A}})\\
&+\alpha _{6}\Phi (c)\Phi (1_{\mathcal{A}})^{*}\Phi (1_{\mathcal{A}})^{*}=(\textstyle \sum _{k=1}^{6} \alpha _{k})\Phi (1_{\mathcal{A}})^{2}=\Phi ((\textstyle \sum _{k=1}^{6} \alpha _{k})1_{\mathcal{A}}).
\end{align*}}
This shows that $c=1_{\mathcal{A}}.$ As a consequence of this last result, for an arbitrary element $c\in \mathcal{A},$ we have
{\allowdisplaybreaks\begin{align*}
&\Phi ((\textstyle \sum _{k=1}^{6} \alpha _{k})c)=\Phi (\alpha _{1} 1_{\mathcal{A}}1_{\mathcal{A}}c+\alpha _{2} 1_{\mathcal{A}}^{*}c1_{\mathcal{A}}^{*}+\alpha _{3} 1_{\mathcal{A}}1_{\mathcal{A}}^{*}c +\alpha _{4} c1_{\mathcal{A}}1_{\mathcal{A}}^{*}\\
&+\alpha _{5} 1_{\mathcal{A}}c1_{\mathcal{A}}+\alpha _{6} c1_{\mathcal{A}}^{*}1_{\mathcal{A}}^{*})=\alpha _{1} \Phi (1_{\mathcal{A}})\Phi (1_{\mathcal{A}})\Phi (c)+\alpha _{2} \Phi (1_{\mathcal{A}})^{*}\Phi (c)\Phi (1_{\mathcal{A}})^{*}\\
&+\alpha _{3} \Phi (1_{\mathcal{A}})\Phi (1_{\mathcal{A}})^{*}\Phi (c)+\alpha _{4} \Phi (c)\Phi (1_{\mathcal{A}})\Phi (1_{\mathcal{A}})^{*}+\alpha _{5} \Phi (1_{\mathcal{A}})\Phi (c)\Phi (1_{\mathcal{A}})\\
&+\alpha _{6}\Phi (c)\Phi (1_{\mathcal{A}})^{*}\Phi (1_{\mathcal{A}})^{*}=(\textstyle \sum _{k=1}^{6} \alpha _{k})\Phi (c).
\end{align*}}
\end{proof}

\begin{claim}\label{s28} (i) $\Phi ((\alpha _{1}+\alpha _{3}+\alpha _{5})a)=(\alpha _{1}+\alpha _{3}+\alpha _{5})\Phi (a),$ for all element $a\in \mathcal{A}$ and (ii) $\Phi (b)^{*}=\Phi (b)^{*},$ for all element $b\in \mathcal{A}.$
\end{claim}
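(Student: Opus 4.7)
The plan is to first dispatch (i) by pure bookkeeping with what we already have, then to prove (ii) by plugging a single carefully chosen triple into the defining identity (\ref{fundident}).

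For (i): decompose $\sum_{k=1}^6\alpha_k = (\alpha_1+\alpha_3+\alpha_5) + (\alpha_2+\alpha_4+\alpha_6)$. By Claim \ref{s27}(ii) we have $\Phi((\sum_k\alpha_k)a)=(\sum_k\alpha_k)\Phi(a)$, and by the additivity of $\Phi$ from Claim \ref{s26} together with the hypothesis (ii) of the theorem,
\[
\Phi((\textstyle\sum_k\alpha_k)a)=\Phi((\alpha_1+\alpha_3+\alpha_5)a)+(\alpha_2+\alpha_4+\alpha_6)\Phi(a).
\]
Comparing with $(\sum_k\alpha_k)\Phi(a)=(\alpha_1+\alpha_3+\alpha_5)\Phi(a)+(\alpha_2+\alpha_4+\alpha_6)\Phi(a)$ immediately yields $\Phi((\alpha_1+\alpha_3+\alpha_5)a)=(\alpha_1+\alpha_3+\alpha_5)\Phi(a)$, as desired.

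For (ii) (which I read as $\Phi(b^{*})=\Phi(b)^{*}$): set $a=c=1_{\mathcal A}$ and leave $b\in\mathcal A$ arbitrary in (\ref{fundident}). Since Claim \ref{s27}(i) gives $\Phi(1_{\mathcal A})=1_{\mathcal B}$, and a multiplicative identity is automatically selfadjoint, the right-hand side of (\ref{fundident}) collapses to
\[
(\alpha_1+\alpha_3+\alpha_5)\Phi(b)+(\alpha_2+\alpha_4+\alpha_6)\Phi(b)^{*},
\]
while the argument on the left simplifies to $(\alpha_1+\alpha_3+\alpha_5)b+(\alpha_2+\alpha_4+\alpha_6)b^{*}$. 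Using additivity (Claim \ref{s26}) to split the image, then part (i) just established and the hypothesis $\Phi((\alpha_2+\alpha_4+\alpha_6)x)=(\alpha_2+\alpha_4+\alpha_6)\Phi(x)$, one obtains
\[
(\alpha_2+\alpha_4+\alpha_6)\Phi(b^{*}) = (\alpha_2+\alpha_4+\alpha_6)\Phi(b)^{*}.
\]
Since the hypothesis (ii) of Theorem \ref{thm11} also supplies $\alpha_2+\alpha_4+\alpha_6\neq 0$, cancellation gives $\Phi(b^{*})=\Phi(b)^{*}$.

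I do not anticipate any real obstacle here: both parts are short consequences of the additivity of $\Phi$ (Claim \ref{s26}) together with the normalizations already assembled in Claim \ref{s27} and the standing hypotheses (i)--(ii) that $\Phi(1_{\mathcal A})$ is a projection (hence equals $1_{\mathcal B}$) and that $\Phi$ is $(\alpha_2+\alpha_4+\alpha_6)$-homogeneous. The only subtlety is choosing the right test triple in (\ref{fundident}); the choice $a=c=1_{\mathcal A}$ is the natural one because it kills every non-$b$ factor on the right-hand side while producing exactly one copy of $\Phi(b)$ per term involving $b$ and one copy of $\Phi(b)^{*}$ per term involving $b^{*}$, matching the split of the scalars into $(\alpha_1+\alpha_3+\alpha_5)$ and $(\alpha_2+\alpha_4+\alpha_6)$.
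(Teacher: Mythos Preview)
Your proof is correct and follows essentially the same approach as the paper: part (i) is obtained by combining additivity, Claim~\ref{s27}(ii), and hypothesis (ii) of Theorem~\ref{thm11}, and part (ii) by inserting $a=c=1_{\mathcal A}$ into (\ref{fundident}), using $\Phi(1_{\mathcal A})=1_{\mathcal B}$, and then cancelling the common $(\alpha_1+\alpha_3+\alpha_5)\Phi(b)$ term to conclude via $\alpha_2+\alpha_4+\alpha_6\neq 0$.
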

\begin{proof} It is clear that $\Phi ((\alpha _{1}+\alpha _{3}+\alpha _{5})a)=(\alpha _{1}+\alpha _{3}+\alpha _{5})\Phi (a),$ for all element $a\in \mathcal{A},$ because of hypothesis (ii), of the Theorem \ref{thm11}, and Claim \ref{s27}(ii). Thus, for an arbitrary element $b\in \mathcal{A}$ we have
{\allowdisplaybreaks\begin{align*}
&\Phi (\alpha _{1} 1_{\mathcal{A}}b1_{\mathcal{A}}+\alpha _{2} 1_{\mathcal{A}}^{*}1_{\mathcal{A}}b^{*}+\alpha _{3} b1_{\mathcal{A}}^{*}1_{\mathcal{A}} +\alpha _{4} 1_{\mathcal{A}}1_{\mathcal{A}}b^{*}+\alpha _{5} b1_{\mathcal{A}}1_{\mathcal{A}}+\alpha _{6} 1_{\mathcal{A}}b^{*}1_{\mathcal{A}}^{*})\\
&=\alpha _{1} \Phi (1_{\mathcal{A}})\Phi (b)\Phi (1_{\mathcal{A}})+\alpha _{2} \Phi (1_{\mathcal{A}})^{*}\Phi (1_{\mathcal{A}})\Phi (b)^{*}+\alpha _{3} \Phi (b)\Phi (1_{\mathcal{A}})^{*}\Phi (1_{\mathcal{A}})\\
&+\alpha _{4} \Phi (1_{\mathcal{A}})\Phi (1_{\mathcal{A}})\Phi (b)^{*}+\alpha _{5} \Phi (b)\Phi (1_{\mathcal{A}})\Phi (1_{\mathcal{A}})+\alpha _{6}\Phi (1_{\mathcal{A}})\Phi (b)^{*}\Phi (1_{\mathcal{A}})^{*}
\end{align*}}
that leads to the identity $(\alpha _{1}+\alpha _{3}+\alpha _{5})\Phi (b)+(\alpha _{2}+\alpha _{4}+\alpha _{6})\Phi (b^{*})=(\alpha _{1}+\alpha _{3}+\alpha _{5})\Phi (b)+(\alpha _{2}+\alpha _{4}+\alpha _{6})\Phi (b)^{*}.$ Consequently, we get $\Phi (b^{*})=\Phi (b)^{*}.$ 
\end{proof}

\begin{claim}\label{s29} $\Phi $ is a multiplicative map.
\end{claim}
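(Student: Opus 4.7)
The plan is to exploit the fundamental identity (\ref{fundident}) with the special choice $a=1_{\mathcal{A}}$, combined with every bit of structure established so far: additivity (Claim \ref{s26}), $\Phi(1_{\mathcal{A}})=1_{\mathcal{B}}$ (Claim \ref{s27}(i)), homogeneity with respect to the scalars $\beta:=\alpha_{1}+\alpha_{3}+\alpha_{5}$ and $\gamma:=\alpha_{2}+\alpha_{4}+\alpha_{6}$ (Claim \ref{s28}(i) together with hypothesis (ii) of Theorem \ref{thm11}), and involution preservation (Claim \ref{s28}(ii)). Setting $a=1_{\mathcal{A}}$ collapses the six triple products on the left of (\ref{fundident}) into just two summands, $bc$ (with coefficient $\beta$) and $cb^{*}$ (with coefficient $\gamma$), and the right-hand side collapses in precisely the same manner. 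After invoking additivity and pulling the scalars $\beta,\gamma$ through $\Phi$, I obtain the key identity
\[
\beta\,\Phi(bc)+\gamma\,\Phi(cb^{*})=\beta\,\Phi(b)\Phi(c)+\gamma\,\Phi(c)\Phi(b)^{*} \qquad (\star)
\]
valid for all $b,c\in\mathcal{A}$.

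Next, I would generate a companion relation by two auxiliary manipulations of $(\star)$. Replacing $c$ by $c^{*}$ and using $\Phi(c^{*})=\Phi(c)^{*}$ yields $\beta\,\Phi(bc^{*})+\gamma\,\Phi(c^{*}b^{*})=\beta\,\Phi(b)\Phi(c)^{*}+\gamma\,\Phi(c)^{*}\Phi(b)^{*}$. Applying the involution $\ast$ to $(\star)$ itself, using $(xy)^{*}=y^{*}x^{*}$ and Claim \ref{s28}(ii), yields the conjugated identity $\bar\gamma\,\Phi(bc^{*})+\bar\beta\,\Phi(c^{*}b^{*})=\bar\gamma\,\Phi(b)\Phi(c)^{*}+\bar\beta\,\Phi(c)^{*}\Phi(b)^{*}$. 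These are two linear relations in the two unknowns $\Phi(bc^{*})$ and $\Phi(c^{*}b^{*})$; their coefficient determinant is $|\beta|^{2}-|\gamma|^{2}$, which is nonzero by the standing hypothesis $|\alpha_{1}+\alpha_{3}+\alpha_{5}|\neq|\alpha_{2}+\alpha_{4}+\alpha_{6}|$.

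Multiplying the first relation by $\bar\beta$ and the second by $\gamma$ and subtracting eliminates $\Phi(c^{*}b^{*})$ and delivers $(|\beta|^{2}-|\gamma|^{2})\,\Phi(bc^{*})=(|\beta|^{2}-|\gamma|^{2})\,\Phi(b)\Phi(c)^{*}$, so that $\Phi(bc^{*})=\Phi(b)\Phi(c)^{*}$. Finally, replacing $c$ by $c^{*}$ once more and invoking Claim \ref{s28}(ii) (so that $\Phi(c^{**})=\Phi(c)$) gives $\Phi(bc)=\Phi(b)\Phi(c)$ for all $b,c\in\mathcal{A}$, which is the desired multiplicativity.

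I do not anticipate any real technical obstacle here: the heavy lifting has already been done in Claims \ref{s26}--\ref{s28}, and the substitution $a=1_{\mathcal{A}}$ is the natural one because it symmetrizes the six triple-product terms into the two scalar sums $\beta$ and $\gamma$. The only delicate point is that the final scalar elimination genuinely needs the full strength of the hypothesis $|\beta|\neq|\gamma|$; without it, the determinant $|\beta|^{2}-|\gamma|^{2}$ vanishes and one is left with a single relation entangling $\Phi(bc)$ and $\Phi(cb^{*})$, from which no separate product formula can be extracted.
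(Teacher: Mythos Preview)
Your proposal is correct and follows essentially the same route as the paper: substitute $a=1_{\mathcal{A}}$ into (\ref{fundident}) to obtain the key identity $(\star)$, generate a companion relation via involution together with the substitution $c\mapsto c^{*}$, and eliminate using the nonvanishing determinant $|\beta|^{2}-|\gamma|^{2}$. The only cosmetic difference is the order of operations: the paper applies the involution first and then the substitution, landing directly on two relations in $\Phi(bc)$ and $\Phi(cb^{*})$ and concluding $\Phi(bc)=\Phi(b)\Phi(c)$ without a final back-substitution, whereas you work with $\Phi(bc^{*})$ and $\Phi(c^{*}b^{*})$ and substitute back at the end.
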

\begin{proof} For arbitrary elements $b,c\in \mathcal{A},$ replace $a$ by $1_{\mathcal{A}}$ in the identity (\ref{fundident}). Then 
{\allowdisplaybreaks\begin{align*}
&\Phi (\alpha _{1} 1_{\mathcal{A}}bc+\alpha _{2} 1_{\mathcal{A}}^{*}cb^{*}+\alpha _{3} b1_{\mathcal{A}}^{*}c +\alpha _{4} c1_{\mathcal{A}}b^{*}+\alpha _{5} bc1_{\mathcal{A}}+\alpha _{6} cb^{*}1_{\mathcal{A}}^{*})\\
&=\alpha _{1}\Phi (1_{\mathcal{A}})\Phi (b)\Phi (c)+\alpha _{2}\Phi (1_{\mathcal{A}})^{*}\Phi (c)\Phi (b)^{*}+\alpha _{3}\Phi (b)\Phi (1_{\mathcal{A}})^{*}\Phi (c)\\
&+\alpha _{4}\Phi (c)\Phi (1_{\mathcal{A}})\Phi (b)^{*}+\alpha _{5}\Phi (b)\Phi (c)\Phi (1_{\mathcal{A}})+\alpha _{6}\Phi (c)\Phi (b)^{*}\Phi (1_{\mathcal{A}})^{*}
\end{align*}}
This results in the identity
{\allowdisplaybreaks\begin{align}\allowdisplaybreaks\label{id11}
&(\alpha _{1}+\alpha _{3}+\alpha _{5})\Phi (bc)+(\alpha _{2}+\alpha _{4}+\alpha _{6})\Phi (cb^{*})=(\alpha _{1}+\alpha _{3}+\alpha _{5})\Phi (b)\Phi (c)\nonumber\\
&+(\alpha _{2}+\alpha _{4}+\alpha _{6})\Phi (c)\Phi (b)^{*}.
\end{align}}
By applying involution to the identity (\ref{id11}), we get
{\allowdisplaybreaks\begin{align}\allowdisplaybreaks\label{id12}
&(\overline{\alpha _{1}+\alpha _{3}+\alpha _{5}})\Phi (c^{*}b^{*})+(\overline{\alpha _{2}+\alpha _{4}+\alpha _{6}})\Phi (bc^{*})=(\overline{\alpha _{1}+\alpha _{3}+\alpha _{5}})\Phi (c)^{*}\Phi (b)^{*}\nonumber\\
&+(\overline{\alpha _{2}+\alpha _{4}+\alpha _{6}})\Phi (b)\Phi (c)^{*}
\end{align}}
and, replacing in (\ref{id12}) $c^{*}$ by $c,$ we obtain 
{\allowdisplaybreaks\begin{align}\allowdisplaybreaks\label{id13}
&(\overline{\alpha _{2}+\alpha _{4}+\alpha _{6}})\Phi (bc)+(\overline{\alpha _{1}+\alpha _{3}+\alpha _{5}})\Phi (cb^{*})=(\overline{\alpha _{2}+\alpha _{4}+\alpha _{6}})\Phi (b)\Phi (c)\nonumber\\
&+(\overline{\alpha _{1}+\alpha _{3}+\alpha _{5}})\Phi (c)\Phi (b)^{*}.
\end{align}}
Multiplying (\ref{id11}) by the scalar $\overline{\alpha _{1}+\alpha _{3}+\alpha _{5}},$ (\ref{id13}) by the scalar $\alpha _{2}+\alpha _{4}+\alpha _{6}$ and subtracting the resulting identities, we arrive at $(|\alpha _{1}+\alpha _{3}+\alpha _{5}|^{2}-|\alpha _{2}+\alpha _{4}+\alpha _{6}|^{2})\Phi (bc)=(|\alpha _{1}+\alpha _{3}+\alpha _{5}|^{2}-|\alpha _{2}+\alpha _{4}+\alpha _{6}|^{2})\Phi (b)\Phi (c)$ which results in $\Phi (bc)=\Phi (b)\Phi (c).$ This shows that $\Phi $ is multiplicative.
\end{proof}
Therefore, by Claims \ref{s26}, \ref{s28}(ii) and \ref{s29} we conclude that $\Phi $ is a $\ast $-ring isomorphism.

The proof of the Theorem \ref{thm11} is complete.

From Theorem \ref{thm11} we can deduce the following result. However, we first present the necessary definitions and notations.

Let $\mathcal{A}$ and $\mathcal{B}$ be two complex $\ast $-algebras and $\eta $ a nonzero complex number. For $a,b\in \mathcal{A}$ (resp., $a,b\in \mathcal{B}$) denote by $a\vardiamondsuit _{\eta }b=ab+\eta ba^{*},$ the {\it Jordan $\eta $-$\ast $-product}. We say that a nonlinear map $\Phi:\mathcal{A}\rightarrow \mathcal{B}$ {\it preserves Jordan triple $\ast $-product $a\vardiamondsuit _{\eta }b\vardiamondsuit _{\nu }c$}, where $a\vardiamondsuit _{\eta }b\vardiamondsuit _{\nu }c=(a\vardiamondsuit _{\eta }b)\vardiamondsuit _{\nu }c,$ if $\Phi (a\vardiamondsuit _{\eta }b\vardiamondsuit _{\nu }c)=\Phi (a)\vardiamondsuit _{\eta }\Phi (b)\vardiamondsuit _{\nu }\Phi (c),$ for all elements $a,b,c\in \mathcal{A}.$ 

From the above definition, we can easily verify that nonlinear maps preserving Jordan triple $\ast $-products, as defined in \cite{Li1} and \cite{Zhao}, are nonlinear maps preserving Jordan triple $\ast $-products $a\vardiamondsuit _{1}b\vardiamondsuit _{1}c$ and $a\vardiamondsuit _{-1}b\vardiamondsuit _{-1}c,$ respectively, and nonlinear maps preserving Jordan triple $\ast $-product $a\vardiamondsuit _{\eta }b\vardiamondsuit _{\nu }c$ are nonlinear maps that preserve sum of triple products $1abc+0a^{*}cb^{*}+\eta ba^{*}c +\nu \overline{\eta} cab^{*}+0bca+\nu cb^{*}a^{*}.$

\begin{corollary} Let $\mathcal{A}$ and $\mathcal{B}$ be two unital complex $\ast $-algebras with $1_{\mathcal{A}}$ and $1_{\mathcal{B}}$ their multiplicative identities, respectively, and such that $\mathcal{A}$ is prime and has a nontrivial projection. Then every bijective nonlinear map $\Phi :\mathcal{A}\rightarrow \mathcal{B}$ preserving triple $\ast $-product $a\vardiamondsuit _{\eta }b\vardiamondsuit _{\nu }c,$ where $\eta ,\nu $ are nonzero complex numbers satisfying the conditions $\eta \neq -1$ and $|\nu |\neq 1,$ is additive. In addition, if (i) $\Phi (1_{\mathcal{A}})$ is a projection of $\mathcal{B}$ and (ii) $\Phi (\nu (\overline{\eta }+1)a)=\nu (\overline{\eta }+1)\Phi (a),$ for all element $a\in \mathcal{A},$ then $\Phi $ is a $\ast $-ring isomorphism. In particular, if $\Phi (1_{\mathcal{A}})$ is a projection of $\mathcal{B}$ and $\eta $ and $\nu $ are nonzero complex numbers such that $\nu (\overline{\eta }+1)$ is a rational number, then $\Phi $ is a $\ast $-ring isomorphism. 
\end{corollary}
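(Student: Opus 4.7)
The plan is to deduce the corollary directly from Theorem \ref{thm11} by choosing the right scalars $\alpha_{k}$. Following the expansion noted in the paragraph preceding the corollary, the Jordan triple $\ast$-product $a\vardiamondsuit_{\eta}b\vardiamondsuit_{\nu}c$ expands as
\[
 abc+\eta ba^{*}c+\nu\overline{\eta}\, cab^{*}+\nu cb^{*}a^{*},
\]
so preservation of this expression corresponds to identity \eqref{fundident} with the scalar tuple $(\alpha_{1},\ldots,\alpha_{6})=(1,0,\eta,\nu\overline{\eta},0,\nu)$. First I would record the two key sums:
\[
 \alpha_{1}+\alpha_{3}+\alpha_{5}=1+\eta,\qquad \alpha_{2}+\alpha_{4}+\alpha_{6}=\nu(\overline{\eta}+1).
\]
Since $|\overline{\eta}+1|=|1+\eta|$, the hypothesis $|\alpha_{1}+\alpha_{3}+\alpha_{5}|-|\alpha_{2}+\alpha_{4}+\alpha_{6}|\neq 0$ becomes $|1+\eta|(1-|\nu|)\neq 0$, which is equivalent to $\eta\neq-1$ and $|\nu|\neq 1$, exactly the assumptions in the corollary.

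With this in hand, the first assertion (additivity of $\Phi$) is immediate from the additivity part of Theorem \ref{thm11}. For the second assertion I would verify that hypothesis (ii) of Theorem \ref{thm11} translates correctly: $\alpha_{2}+\alpha_{4}+\alpha_{6}=\nu(\overline{\eta}+1)$ is nonzero (because $\nu\neq 0$ and $\eta\neq -1$), and the assumed identity $\Phi(\nu(\overline{\eta}+1)a)=\nu(\overline{\eta}+1)\Phi(a)$ is precisely condition (ii). Combined with the standing assumption that $\Phi(1_{\mathcal{A}})$ is a projection, Theorem \ref{thm11} yields that $\Phi$ is a $\ast$-ring isomorphism.

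For the final ``in particular'' statement, the additivity already obtained in the first part forces $\mathbb{Q}$-linearity of $\Phi$ in the standard way: from $\Phi(na)=n\Phi(a)$ for each positive integer $n$ (iterated additivity) one deduces $\Phi(a/n)=\Phi(a)/n$ and hence $\Phi(qa)=q\Phi(a)$ for every rational $q$; note that division by $n$ in $\mathcal{B}$ is legitimate since $\mathcal{B}$ is a complex algebra. Applied to $q=\nu(\overline{\eta}+1)$, this yields condition (ii) for free, so the preceding paragraph's conclusion applies.

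I do not anticipate any real obstacle: the entire argument is a careful bookkeeping of which triple $\ast$-product instantiation is being considered, followed by a one-line scalar computation to match the hypotheses of Theorem \ref{thm11}. The only mildly delicate point is making sure that the equality $|1+\eta|=|\overline{\eta}+1|$ is invoked so that the clean factorization $(1-|\nu|)|1+\eta|\neq 0$ appears; otherwise one risks stating the sharpness condition in an asymmetric and less transparent form.
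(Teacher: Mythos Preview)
Your proposal is correct and follows exactly the route the paper intends: identify the scalar tuple $(\alpha_{1},\ldots,\alpha_{6})=(1,0,\eta,\nu\overline{\eta},0,\nu)$, check that the moduli condition of Theorem~\ref{thm11} becomes $|1+\eta|(1-|\nu|)\neq 0$, and then invoke the theorem together with $\mathbb{Q}$-linearity (from additivity) for the ``in particular'' clause. The paper states the corollary without a written proof, merely recording the scalar identification in the paragraph preceding it, so your argument is simply a fully spelled-out version of what the paper leaves implicit.
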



\end{document}